\numberwithin{equation}{section}
\newtheorem{lemma}{Lemma}[section]
\newtheorem{definition}{Definition}[section]
\newtheorem{proposition}{Proposition}[section]
\newtheorem{remark}{Remark}[section]
\newlength{\defbaselineskip}
\newcommand{\setlinespacing}[1]%
           {\setlength{\baselineskip}{#1 \defbaselineskip}}
\newcommand{\doublespacing}{\setlength{\baselineskip}%
                           {1.5 \defbaselineskip}}
\newcommand{\CC}{{\mathbb C}}
\newcommand{\RR}{{\mathbb R}}
\newcommand{\ZZ}{{\mathbb Z}}
\newcommand{\NN}{{\mathbb N}}
\def\E{\mathbb{E}}
\newcommand{\deq}{\stackrel{\rm d}{=}}
\newcommand{\beql}[1]{\begin{equation}\label{#1}}
\newcommand{\eeq}{\end{equation}}
\newcommand{\beqal}[1]{\begin{eqnarray}\label{#1}}
\newcommand{\eeqa}{\end{eqnarray}}
\newcommand{\beq}{\begin{displaymath}}
\newcommand{\eeqno}{\end{displaymath}}
\newcommand{\bali}[1]{\begin{align}\label{#1}}
\newcommand{\eali}{\begin{align}}
\newcommand{\balino}{\begin{align*}}
\newcommand{\ealino}{\begin{align*}}
\newcommand{\ep}{\epsilon}
\newcommand{\Cov}{\text{\rm Cov}}
\newcommand{\non}{\nonumber}
\newcommand{\baa}{\begin{eqnarray*}}
\newcommand{\eaa}{\end{eqnarray*}}
\newcommand{\ttl}{\Large Semimartingale properties of a generalized fractional Brownian \\[5pt] motion  and its mixtures with applications in asset pricing}
\begin{document}

\title[]{\ttl}

\author{Tomoyuki \ Ichiba$^*$}
\address{$^*$Department of Statistics \& Applied Probability, University of California, Santa Barbara, CA 93106}
\email{ichiba@pstat.ucsb.edu}

\author{Guodong Pang$^{**}$}
\address{$^{**}$Department of Computational Applied Mathematics and Operations Research,
George R. Brown College of Engineering,
Rice University,
Houston, TX 77005}
\email{gdpang@rice.edu}
\author{Murad S. \ Taqqu$^\dag$}
\address{$^\dag$Department of Mathematics and Statistics, Boston University, Boston, MA 02215}
\email{murad@math.bu.edu}

%
%

\begin{abstract} 

We study the semimartingale properties for the generalized fractional Brownian motion (GFBM) introduced by Pang and Taqqu (2019)  and discuss the applications of the GFBM and its mixtures to financial asset pricing.  
 The GFBM is self-similar and has non-stationary increments, whose Hurst index $H \in (0,1)$ is determined by two parameters. We identify the regions of these two parameter values where the GFBM is a semimartingale.  
 
 


We next study the mixed process made up of an independent BM and a GFBM and identify the range of parameters for it to be a semimartingale, which leads to $H \in (1/2,1)$ for the GFBM. 
 We also derive the associated equivalent Brownian measure.
 This result is in great contrast with the mixed FBM with $H \in \{1/2\}\cup(3/4,1]$ proved by Cheridito (2001) and shows the significance of the additional parameter introduced in the GFBM. 
 
 We then study the semimartingale asset pricing theory with the mixed GFBM, in presence of long range dependence, and applications in option pricing and portfolio optimization. Finally we discuss the implications of using GFBM on arbitrage theory, in particular, providing an example of semimartingale asset pricing model of long range dependence without arbitrage.

\end{abstract}

\keywords{Generalized fractional Brownian motion, semimartingale, 
mixture of Brownian motion and GFBM, stock price model, option pricing, portfolio optimization, arbitrage}

\maketitle

\allowdisplaybreaks

\doublespacing

\allowdisplaybreaks

\section{Introduction}

\subsection{Semimartingale properties of GFBM and its mixture}
Semimartingale and non-semimartingale properties of the standard fractional Brownian motion  (FBM) $B^H$ and its mixtures are well understood. These properties are important in modeling stock price \cite{kluppelberg2004fractional,rostek2009option}, constructing arbitrage strategies and hedging policies \cite{rogers1997arbitrage,shiryaev1998arbitrage,salopek1998tolerance,cheridito2003arbitrage}, 
and modeling rough volatility \cite{gatheral2018volatility,bayer2016pricing,roughvolatility-webpage}. 
The standard FBM $B^H$ captures short/long-range dependence, and possesses the self-similar and stationary increment properties, as well as regular path properties. 
It may arise as the limit process of scaled random walks with long-range dependence or an integrated shot noise process \cite{PT17}.

A generalized fractional Brownian motion (GFBM) $X$, introduced by Pang and Taqqu \cite{pang-taqqu}, is defined via the following (time-domain) integral representation: 
\begin{equation} \label{def-X}
\{X(t)\}_{t\in \RR} \deq \left\{  c \int_\RR \left( (t-u)_+^{\alpha} - (-u)_+^{\alpha} \right)    |u|^{-\gamma/2} B({\rm d}u)  \right\}_{t\in \RR},
\end{equation}
where $(\alpha, \gamma)$ are constants in the region 
\begin{equation} \label{def-X-g-c} 
 \quad  \gamma \in [0,1), \quad
\alpha \in \Big(-\frac{1}{2}+ \frac{\gamma}{2}, \ \frac{1}{2}+ \frac{\gamma}{2}\Big)\, ,
\end{equation}
 $B({\rm d} u)$ is a Gaussian random measure on $\RR$ with the Lebesgue control measure ${\rm d}u$, and $c=c(\alpha,\gamma) \in \RR_+$ is a normalization constant (see \eqref{eq: normalizeC}). It is a $H$-self-similar Gaussian process, that is, $\{X(\kappa t): t\in \RR\} \deq \{\kappa^{H}X(t): t \in \RR\}$ for any $\kappa > 0$,   with 
 $$
H:=\alpha - \frac{\gamma}{2} + \frac{1}{2} \in (0,1)\,,$$
but does not have stationary increments. 
The Hurst parameter $H$ is determined by two-parameters $(\alpha,\gamma)$ in the range shown in Figure \ref{fig-1}.  One may regard $\gamma \in (0,1)$ as a scale/shift parameter.  
The multiplier $ |u|^{-\gamma/2}$ in the stochastic integral in \eqref{def-X} renders a higher ( resp., smaller) value of the integrand when $\,u\,$  is close to $\,0\,$ ( resp., goes to $\,\infty\,$).  
On the one hand,  it is somewhat surprising that the GFBM $X$ has the H{\"o}lder continuity property with the same parameter $H-\ep$ for $\ep>0$ as the FBM $B^H$, and that the parameter $\gamma$ does affect the differentiability of the paths  \cite{ichiba-pang-taqqu}.

The GFBM $X$ defined in \eqref{def-X} can be also written as 
\begin{equation} \label{def-X-alternative}
\{X(t)\}_{t\in \RR} \deq \left\{  c \int_\RR \left( (t-u)_+^{H- \frac{1}{2} +\frac{ \gamma}{2} } - (-u)_+^{H- \frac{1}{2} +\frac{ \gamma}{2} } \right)    |u|^{-\gamma/2} B( {\mathrm d} u)  \right\}_{t\in \RR}.
\end{equation}
It is clear that when $\gamma=0$, this becomes the standard FBM $B^H$ with Hurst parameter $H = \alpha +1/2 \in (0,1)$ (equivalently, $\alpha \in ( -1/2,  1/2)$): 
\begin{equation} \label{FBM-rep}
\{B^H(t)\}_{t\in \RR} \deq \left\{  c \int_\RR \left( (t-u)_+^{H-\frac{1}{2}} - (-u)_+^{H-\frac{1}{2}} \right)   B( {\mathrm d} u)  \right\}_{t\in \RR}. 
\end{equation}

In the special case when $H=1/2$, $B^H$ becomes the standard Brownian motion $B(t)$. For the GFBM, when $\gamma \in [0,1)$ and $\alpha=0$, that is, $H\in (0,1/2]$, it becomes 
\begin{equation} \label{eqn-X-alpha0} 
\{X(t)\}_{t\in \RR_+} \deq \left\{  c \int_0^t  u^{-\gamma/2} B({\rm d}u)  \right\}_{t\in \RR_+}  \deq \left\{ B(t^{1-\gamma})  \right\}_{t\in \RR_+}
\end{equation}
with $c= (1-\gamma)^{1/2}$, that is, the GFBM $X$  in \eqref{def-X} reduces to a time-changed Brownian motion with a power-law time change function $t^{1-\gamma}$.  Furthermore, when $\gamma=0$ and $\alpha=0$,  $X$ in \eqref{def-X} becomes the standard Brownian motion $B$ with $H = 1/2$.  

The GFBM $X$ is derived as the limit of integrated power-law shot noise processes in \cite{pang-taqqu} (see a brief review in Section \ref{sec-SNP-limit}).  
We have studied in \cite{ichiba-pang-taqqu} some important path properties of the GFBM $X$, including the H{\"o}lder continuity property, the differentiability and non-differentiability properties, and functional/local law of the iterated logarithm (LIL), see Section \ref{sec-GFBM-review} for a summary of its fundamental properties. Some additional path properties such as the exact uniform modulus of continuity and Chung's LIL are recently studied in \cite{wang2021exact,wang2021lower}.

\begin{figure} 
\begin{center}
\begin{tabular}{cc}
\begin{tikzpicture}[scale=2]
\draw[thick,->] (1, 0) -- (1.3, 0.0) node[anchor=north west] {$\gamma$}; 
\draw[dashed] (0, 0) -- (1, 0); 
\draw[thick,->] (0,-0.8) -- (0, 1.3) node[anchor=south east] {$\alpha$}; 
\draw[] (0,0) node[anchor = north east] {$0$};
\draw[] (0,-0.5) -- (1, 0) node[below] {$1$};
\draw[] (0, 0.5) -- (1,1) node[anchor = north west]{};
\draw[] (0, 0.5) node[anchor=south east] {$1/2$};
\draw[] (0, -0.5) node[anchor=north east] {$-1/2$};
\draw[] (1,0) -- (1,1) node[]{};
\draw[dotted] (0, 1/2) -- (1,1/2) node[]{};
\draw[] (0.5, 0.5) node[anchor=south west]{(I)};
\draw[] (0.5, 0.4) node[anchor=north]{(II)};
\draw[fill=gray, opacity =0.2] ( (0,0.5) -- (1, 0.5) -- (1, 1) -- cycle;
\end{tikzpicture}
&
\begin{tikzpicture}[scale=2]
\draw[thick,->] (1, 0) -- (1.3, 0.0) node[anchor=north west] {$\gamma$}; 
\draw[dashed] (0, 0) -- (1, 0); 
\draw[thick,->] (0,-0.8) -- (0, 1.3) node[anchor=south east] {$\alpha$}; 
\draw[] (0,0) node[anchor = north east] {$0$};
\draw[] (0,-0.5) -- (1, 0) node[below] {$1$};
\draw[] (0, 0.5) -- (1,1) node[anchor = north west]{};
\draw[] (0, 0.5) node[anchor=south east] {$1/2$};
\draw[] (0, -0.5) node[anchor=north east] {$-1/2$};
\draw[] (1,0) -- (1,1) node[]{};
\draw[dotted] (0, 1/2) -- (1,1/2) node[]{};
\draw[] (0.5, 0.5) node[anchor=south west]{(I)};
\draw[] (0.35, 0.45) node[anchor=north]{(II)-1};
\draw[] (0.5, 0.02) node[anchor=north]{(II)-2};
\draw[fill=gray, opacity =0.2] ( (0,0.5) --  (1, 1) -- (1,0.5) -- cycle;
\draw[fill=gray, opacity =0.6] ( (0,0.5) --  (1, 0.5) -- (0, 0) -- cycle;
\end{tikzpicture}
\\
(a) & (b) \\
\end{tabular}
\caption{(a) The GFBM $\,X\,$ in \eqref{def-X} is semimartingale in the region (I): $\,0 < \gamma < 1\,$, $\,1/2 < \alpha < (1+\gamma)/2\,$  with $H \in (1/2,1)$ and in the line segment: $\alpha=0$, $\gamma \in [0,1)$ with $H\in (0,1/2]$; it is 
 not semimartingale in the region (II): $\,0 < \gamma < 1 \,$, $\, \alpha \in ( (\gamma-1)/2,  1/2] \setminus \{0\}\,$ with $H\in (0,1/2)\cup(1/2,1)$. 
 The standard fractional Brownian motion corresponds to the line segment: $\, \gamma = 0 \,$, $\, -1/2 < \alpha < 1/2 \,$ with $\, H = \alpha + 1/2\,$. The standard Brownian motion corresponds to the point $\, \alpha = \gamma = 0\,$. 
(b) The mixed GFBM $\,Y\,$ in \eqref{eq: Y} is semimartingale in the region (I) and the region (II)-1: $\,0 < \gamma < 1 \,$, $\, \gamma/2 < \alpha < (1+\gamma)/2 \,$  with $H \in (1/2,1)$, and the line segment: $\alpha=0$, $\gamma \in [0,1)$  with $H\in (0,1/2]$. 
}
\label{fig-1}
\end{center}
\end{figure}

In this paper we focus on the semimartingale properties associated with the GFBM and its mixture with an independent BM. 
For FBM $B^H$, it is shown in \cite{rogers1997arbitrage} that $B^H$ is not a semimartingale for $H \in (0,1/2) \cup (1/2,1)$. When $H=1/2$, $B^H$ is a Brownian motion, and is a martingale (thus, also a semimartingale). 
In \cite{cheridito2001mixed}, it is shown that the mixture of independent BM and FBM is a semimartingale for $H \in (3/4,1)$. This important finding is proved using a filtering approach in \cite{cai2016mixed}.  
These results have significant implications in financial applications, in particular, arbitrage theory and pricing, see, e.g., \cite{rogers1997arbitrage,cheridito2003arbitrage}. 
The proofs for these results rely heavily upon the stationary increments property of the FBM.  The lack of stationary increments of the GFBM $X$ requires new approaches to establish similar results.

We identify the regions of the two parameter values $(\alpha, \gamma)$ in which the GFBM $X$ is a semimartingale (see 
Figure \ref{fig-1}(a) and Proposition \ref{prop-sm-X}). To do so, we first establish the necessary and sufficient condition for the square integrability of the derivative of the kernel of this Gaussian process.
We use this to distinguish the parameter regions for the semimartingale property. We can then apply the characterization of the spectral representation of Gaussian semimartingales by Basse \cite{basse2009spectral}. 


Recall that the standard FBM $B^H$ is a semimartingale if and only if $H=1/2$ while the GFBM $X$ can be a semimartingale for $H\in (1/2,1)$ in { the triangular} region (I) of parameters of $(\alpha,\gamma)$, 
{and for $H\in (0,1/2]$ in the line segment with $\alpha=0$ and $\gamma \in [0,1)$.
Note that  there is a 
quadrilateral shape of $(\alpha,\gamma)$ that results in $H \in (1/2,1)$, and there is a line segment of $(\alpha,\gamma)$ satisfying $\alpha=\gamma/2$ over $[0,1]$ resulting in $H=1/2$ but only $\gamma=0$ gives a standard BM. 
On the other hand, the line segment $\alpha=0$ and $\gamma \in (0,1)$ resulting in $H\in (0,1/2)$ gives a time-changed BM. 
} 
However,  although $X$ is the semimartingale in region (I), it is a process of finite variation and has an explicit expression, thanks to the result of spectral representation of Gaussian semimartingales in Basse \cite{basse2009spectral}.

We then study the semimartingale properties of the mixed GFBM process (sum in \eqref{eq: Y} of an independent BM and GFBM). 
It is shown in \cite{cheridito2001mixed,cai2016mixed,bender2011fractional} that the mixed  FBM $B^H$ process is a semimartingale with respect to its own filtration if and only if $H\in \{1/2\} \cup (3/4,1]$.
We show in Proposition \ref{prop-mixture} that the mixed GFBM process is a semimartingale in the region of the two parameter values $(\alpha, \gamma)$ that is equivalent to $H\in (1/2,1)$ (see regions (I) and (II)-1 in Figure \ref{fig-1}(b)), 
{as well as in the line segment $\alpha=0$ and $\gamma \in [0,1)$ resulting in $H\in (0,1/2]$.  That is, there exist parameter values $(\alpha, \gamma)$ resulting $H\in (0,1)$ that can make the mixed GFBM a semimartingale. }
\emph{It is worth highlighting  the wide range of values of the Hurst index $H$  for which the mixed GFBM process is a semimartingale when $\gamma > 0$.}  This is one significant consequence of introducing the parameter $\gamma$ in the GFBM. 

In the two regions  (I) and (II)-1, the mixed GFBM process has different behaviors due to the fact that in region (I) the GFBM $X$ is a process of finite variation. 
For both regions (I) and (II)-1,
we can use the characterization of the equivalence of Gaussian measures in Shepp \cite{shepp1966radon}, and show that the absolute continuity of the measure of the mixed GFBM with respect to that of the standard BM, and provide an expression of the Radon--Nikodym density, using the solutions to the associated Wiener--Hopf integral equations. 
For that purpose, we establish that $H \in (1/2,1)$ is the necessary and sufficient condition for the second partial derivative function for the covariance function of the GFBM $X$ to be square integrable (see Lemma \ref{lm:sq-integrability}).
For region (I), thanks to the finite variation property of the GFBM $X$, 
the mixed GBFM becomes a Brownian motion plus  a random drift of finite variation, and as a consequence, we provide another expression of the Radon--Nikodym density, in terms of the conditional expectation, applying the results in \cite{MR281279}. 
 We also conjecture that the mixed process is not a semimartingale when $H\in (0,1/2)$ {except the parameters $(\alpha,\gamma)$ taking values in the line segment $\alpha=0$ and $\gamma \in (0,1)$} (see further discussions in Remark \ref{rem-mixture}).


\subsection{Asset pricing with GFBM and its mixture}

We next discuss the GFBM and its mixtures in the study of  financial asset models, in the aspects of modeling long range dependence, non-stationary increments, semimartingale asset pricing theory and arbitrage.

The study of the long range dependence (LRD) in stock market prices has a long history. Cont \cite{cont2005long} discussed the financial modeling of stock prices with the notions of self-similarity, scaling, fractional process and LRD.
In the literature, shot noise process and FBMs have been used to model LRD in financial markets (see, e.g., \cite{kluppelberg2004fractional,altmann2008shot,schmidt2017shot,Stute}). 
 It is well known that the FBM $\, B^{H}\,$ has stationary increments that have slowly decaying autocovariance functions, if $\, H > 1/2\,$. See, for example,  \cite{cheridito2001mixed,cheridito2003arbitrage,MR2378138,rogers1997arbitrage} for the modeling with the FBM and its mixed processes in the financial markets, and also \cite{guasoni2021high,guasoni2019trading}  for the high frequency trading and the optimization under the transaction costs. 
Willinger et al. \cite{willinger1999stock} discuss the question of whether the actual stock market prices have LRD from the empirical investigation, where the Hurst parameter is estimated as slightly above $\, 0.5\,$ for different financial time series. The relatively recent empirical studies also indicate that the stock price process has the Hurst index greater than $\, 0.5\,$ (see e.g., \cite{FernandezMartinez14}). 
In addition, it is also observed in empirical studies that some financial time series exhibits non-stationary increments (see, e.g., \cite{bassler2007nonstationary}).

 Since GFBM does not have stationary increments, the usual definition of LRD cannot be applied. Here, using Lamperti transform which results in a stationary process, we introduce a definition of LRD for self-similar processes, Definition \ref{def:LRDSS}. We show that the GFBM has LRD if and only if $\, \alpha > 0 \,$ (Proposition \ref{prop:LRD}) in the sense of Definition \ref{def:LRDSS}. (For the standard FBM with $\gamma=0$, $\alpha>0$ is equivalent to $H>1/2$.) 
For GFBM,  in both regions in (I) and (II)-1, with $H \in (1/2,1)$, the LRD property holds, however, in the region $\gamma \in (0,1)$ and $0 <\alpha <\gamma/2$ (the triangle area above zero in region (II)-2, resulting in $H \in (0,1/2)$), the LRD property also holds, which differs from the standard FBM with $\gamma=0$. We focus on only the regions in (I) and (II)-1 because of the associated semimartingale properties. 
(It is worth noting that the self-similarity does not necessarily imply LRD of the stationary increments in general, see further discussions in  \cite{cont2005long}.) 


Semimartingale property is one of the most fundamental tools to modern asset pricing theory, because of the equivalent martingale measures and many other associated probabilistic techniques \cite{KS1998}. 
Using FBM $B^H$ itself in this theory has a drawback since it is martingale if and only if $H=1/2$, the BM case. The mixed FBM then draws attention since it is shown to be a semimartingale when the Hurst index of the FBM $H \in (3/4,1)$, see \cite{cheridito2001mixed,cai2016mixed}. However, as observed from financial data \cite{cont2005long,FernandezMartinez14}, the Hurst index can be less than 0.75 and even close to 0.5. 
Therefore, the mixed GFBM will become very useful since we have shown it is a semimartingale when the Hurst index of the GFBM $H \in (1/2,1)$. 
We note that although GFBM itself is also a semimartingale in region (I) with Hurst parameter $H \in (1/2,1)$, it is `unfortunately' of finite variation, so we must resort to the mixed GFBM as an asset pricing model. 

To facilitate the use of the mixed GFBM as a semimartingale asset pricing model, we derive the Radon--Nikodym derivative for the equivalent martingale measure (Proposition  
\ref{prop-pricing-RN}) and semimartingale decompositions.
 These representations will turn out to be useful for the price dynamics of various options and portfolio optimization (Sections \ref{sec: 6.2.1} and \ref{sec: 6.2.2}). In Section \ref{sec: 6.2.2}, we consider the portfolio optimization of the mixed GFBM asset with the GFBM Hurst parameter $H \in (1/2,1)$ (regions (I) and (II)-1), under the log utility function. 
 It is interesting to highlight that the optimal portfolio remains constant as the mixed FBM case with $H\in (3/4,1)$, which shows the robustness of the semimartingale asset pricing theory (in the wider range of Hurst parameter $H\in (1/2,1)$ with the GFBM). While the mean and variance of the optimal portfolio value at each time depend on $H$ only, the covariance function of the optimal portfolio value at different times is  dependent explicitly on the two parameters $\alpha$ and $\gamma$.  In our framework, the larger range of the Hurst index thus provides greater flexibility in modeling and for further theoretical analysis through the use of  It{\^o}'s formula and the properties of semimartingales.

Motivated by Cheridito \cite{cheridito2001mixed}, we also discuss the semimartingale asset price models with small $\epsilon$-perturbation of the driving  GFBM perturbed by the independent BM. 
In the case of standard FBM with $H \in (3/4,1)$, as $\epsilon \downarrow 0$, the price model becomes non-semimartingale
under which arbitrage exist. For the mixed GFBM with $H\in (1/2,1)$, as as $\epsilon \downarrow 0$, the price model remains a semimartingale if the parameters $(\alpha,\gamma)$ are in region (I), and becomes non-semimartingale
if they are in region (II)-1, while it is interesting to observe that in both cases, arbitrage exists (for different reasons).

Finally, we discuss the implications on arbitrage in asset pricing for stock price processes using the GFBM and its mixtures. With FBM $B^H$, 
arbitrage in fractional Bachelier and Black-Scholes models has been well studied in \cite{rogers1997arbitrage,shiryaev1998arbitrage,salopek1998tolerance,cheridito2003arbitrage}. 
{If one uses the GFBM as a stock price process, we find that the only non-arbitrage scenario is the case of standard BM and time-changed BM (with both $\alpha=0$ and $\gamma\in(0,1)$ resulting in $H\in (0,1/2]$). }
Although the GFBM is a semimartingale in parameter region (I), since it is a process of finite variation, arbitrage exists as shown in \cite{HPS84}.  
On the other hand, if one uses the mixed GFBM  as a stock price process, in the parameter region (I), the process becomes a semimartingale as a BM with a finite-variation drift, and thus, no arbitrage exists. This price process with the mixed GFBM is of particular interest, since it also exhibits the long range dependence, similar to that with the mixed FBM.  
Rogers \cite{rogers1997arbitrage} concluded that it is possible to construct a process similar to the FBM to model LRD of returns while avoiding arbitrage. Hence, we have provided an example of a price model with these desirable properties. 
See more discussions in Section \ref{sec-arbitrage}. 



\subsection{Organization of the paper}

The paper is organized as follows. In Section \ref{sec-GFBM-review}, we give the precise definition of the GFBM and summarize some of its properties. In Sections \ref{sec-sm} 
we study the semimartingale properties of the GFBM.  In Section \ref{sec-mix}, the semimartingale property of the mixed BM and GFBM process is investigated. We present the applications in financial models in Sections \ref{sec-finance-price}, including the LRD property (Section \ref{sec-LRD}), shot-noise process and its scaling limit as GFBM (Section \ref{sec-price}), option pricing (Section \ref{sec: 6.2.1}), portfolio optimization (Section \ref{sec: 6.2.2}) and arbitrage (Section \ref{sec-arbitrage}).  
The details of some of the proofs of the main results are given in Section \ref{sec-app}. 

\bigskip

\section{A generalized fractional Brownian motion} \label{sec-GFBM-review}

The GFBM process $X$ defined in \eqref{def-X}  has the following properties: 
\begin{itemize}
\item[(i)]  $X(0) = 0$ and $\E[X(t)]=0$ for all $t\ge 0$;
\item[(ii)] $X$ is a Gaussian process and $\E[X(t)^2] =  t^{2H}$ for $t \ge 0$;
\item[(iii)] $X$ has continuous sample paths almost surely;
\item[(iv)] $X$ is self-similar with Hurst parameter $H \in (0,1)$;
\item[(v)] the paths of $X$ are H{\"o}lder continuous with parameter $H-\ep$ for $\ep>0$ 
almost surely;
\item[(vi)] 
{
the paths of $X$ is non-differentiable if $\, \alpha \in (-1/2+\gamma/2, 1/2] \,$  and $\gamma \in (0,1)$, and differentiable if $\, \alpha \in (1/2, 1/2+\gamma/2)\,$ and $\gamma \in (0,1)$ almost surely (see non-differentiable region (II)  and differentiable region (I) in Figure \ref{fig-1}).} 
\end{itemize}

These properties are established in \cite{pang-taqqu,ichiba-pang-taqqu}. See Proposition 5.1 \cite{pang-taqqu} for properties (iii) and (iv), and Theorems 3.1 and 4.1 in \cite{ichiba-pang-taqqu} for (v) and (vi). 
Note that the class of continuously differentiable function is a subset of the class of $\alpha$-H\"older continuous functions with $\,0 < \alpha \le 1\,$. 

Also recall that the normalization constant $c= c(\alpha, \gamma) \in \RR_+$ is given by \begin{equation} \label{eq: normalizeC}
\begin{split}
c(\alpha,\gamma) \, :=\, &  \Big( \int^{1}_{0} ( 1 - v)^{2 \alpha } v^{-\gamma} {\mathrm d} v + \int^{\infty}_{0} [(1+v)^{\alpha} - v^{\alpha}]^{2}v^{-\gamma} du	 v \Big)^{-1/2} \\
 \, =\, & \Big( \text{Beta} ( 1 - \gamma, 2\alpha + 1) \\
& {} + \Big( \frac{\,\Gamma ( 1 - \gamma) \,}{\, \Gamma ( - 2 \alpha ) \,} - \frac{\,2 \Gamma ( 1 + \alpha - \gamma) \,}{\,\Gamma ( - \alpha)\,}\Big) \Gamma ( -1 - 2 \alpha + \gamma) \Big) \Big)^{-1/2} \, , 
\end{split}
\end{equation}
as shown in Lemma 2.1 of \cite{ichiba-pang-taqqu}. 
Here, $\, \Gamma ( a) \, :=\, \int^{\infty}_{0} x^{a-1} e^{-x} {\mathrm d} x \,$, $\,a > 0 \,$ and $\, \text{Beta}(a,b) \, :=\, \int^{1}_{0} x^{a-1} (1-x)^{b-1} {\mathrm d} x \,$, $\,a, b > 0 \,$ are the Gamma and Beta functions, respectively, and $\, \Gamma (-a) := (-a)^{-1} \Gamma(-a+1) \,$ for positive non-integer $\,a\,$. 
{When $\alpha=0$ and $\gamma \in [0,1)$, we have $c(0, \gamma) = (1-\gamma)^{1/2}$. }

Recall that in the case $\gamma = 0$, the FBM $B^H$ has stationary increments.
 Namely, 
 the second moment of its increment:
 $$
 \E\bigl[(B^H(s) - B^H(t))^2\bigr] = c^2 |t-s|^{2H}\, , 
 $$
 and the covariance function
  \begin{equation} \label{eqn-cov-FBM}
  \E\bigl[B^H(s) B^H(t)\bigr] = \frac{1}{2}c^2 \big(t^{2H} + s^{2H} - |t-s|^{2H}\big)\,,
 \end{equation}
 where $c=c(\alpha,0) =c(H-1/2,0)$ in \eqref{eq: normalizeC}. 
 
When $\gamma \in (0,1)$, in comparison with the FBM $B^H$, the process $X$ loses the stationary increment property. In particular,  
the second moment of its increment is
\begin{align} \label{eqn-Phi}
\Phi(s,t)&:=\E\bigl[(X(t) - X(s))^2\bigr]  \non\\
&\, = c^2 \int_\RR \Big(  (t-u)_+^{\alpha} - (s-u)_+^{\alpha}  \Big)^2  |u|^{-\gamma} {\rm d}u \non\\
&\,= c^2  \int_s^t (t-u)^{2\alpha} u^{-\gamma} {\rm d}u 
+ c^2\int_0^s ( (t-u)^{\alpha} - (s-u)^{\alpha} )^2 u^{-\gamma} {\rm d}u \non\\
&\qquad  + { c^2} \int_0^\infty ((t+u)^{\alpha} -(s+u)^\alpha)^2 u^{-\gamma} {\rm d}u\, , 
\end{align}
and  the covariance function  is  
\begin{align} \label{eqn-Psi}
\Psi(s,t) & = \Cov(X(t), X(s)) = \E[X(s)X(t)] \non\\
&= c^2 \int_\RR \Big(  \left( (t-u)_+^{\alpha} - (-u)_+^{\alpha} \right)  \left( (s-u)_+^{\alpha} - (-u)_+^{\alpha} \right) \Big)  |u|^{-\gamma} {\rm d}u \non\\
&= c^2 \int_0^s (t-u)^{\alpha} (s-u)^{\alpha} u^{-\gamma} {\rm d}u  \non\\
& \qquad + c^2 \int_0^{\infty} ((t+u)^{\alpha} - u^{\alpha}) ((s+u)^{\alpha} - u^{\alpha})u^{-\gamma}{\rm d}u\, , 
\end{align}
for $0\le s \le t$. %

\medskip 


We also remark that generalized FBMs are stated in a more general form with the additional terms involving $(t-u)_{-}^\alpha - (-u)_{-}^\alpha$ in the integrands in  \cite[Sections 5.1 and 5.2]{pang-taqqu}. In this paper we focus on the representations of $X$ in \eqref{def-X} since the other forms with additional terms can be treated similarly.

\bigskip

\section{When is the GFBM a semimartingale?} \label{sec-sm}

Any centered Gaussian process $X$ with right-continuous sample paths has a spectral representation in distribution \cite{kuelbs1973representation}, that is, 
$$
X(t) \deq \int_{-\infty}^t  K_t(s) {\rm d} N(s), \quad t\ge 0\, ,
$$
where $N$ is an independently scattered centered Gaussian random measure and 
$(t,s) \to K_t(s)$ is a square-integrable deterministic function. 
Basse \cite[Theorem 4.6]{basse2009spectral} characterizes the spectral representation of Gaussian semimartingales, identifying the family of kernels $K_t(s)$ for which the representation $ \big\{ \int_{-\infty}^t K_t(s) {\rm d} N(s): t\ge 0\big\}$ is a semimartingale with respect to the natural filtration $\{\mathcal{F}^N_t: t \ge 0\}$. 
Specifically for one-dimensional case $N(\cdot) \equiv B(\cdot)$ being the Brownian Gaussian random measure, it says that $\{X(t): t\ge 0\}$ is an $\{\mathcal{F}^N_t: t \ge 0\}$ semimartingale if and only if  for $t\ge 0$, the kernel can be represented as 
\begin{equation} \label{eqn-K-g-Psi}
K_t(s) \;=\; g(s) + \int_0^t \Psi_r(s) \mu({\rm d}r)\,,
\end{equation}
where $g:\mathbb R_{+} \to \mathbb R$ is locally square integrable with respect to the Lebesgue measure, $\mu (\cdot)$ is a Radon measure on $\mathbb R_{+} $ and a measurable mapping $\Psi_{r}(s): (r, s) \to \mathbb R $ is square integrable with respect to the Lebesgue measure, $\int_{-\infty}^{\infty} \lvert \Psi_{r}(s)\rvert^{2} {\mathrm d} s = 1 $, and $\Psi_{r}(s) = 0 $, $r < s$.

In the case of FBM $B^H$,  we have the kernel function
$$
K_t(s) = (t-s)_+^{H-1/2} - (-s)_+^{H-1/2}\, ,
$$
and $N(\cdot) = B(\cdot)$.  
Since it is a $\{\mathcal{F}^B_t: t \ge 0\}$ semimartingale if and only if $H=1/2$, i.e., a Brownian motion,
applying \cite[Theorem 4.6]{basse2009spectral},  we have $g\equiv 0$, and $ \Psi_r(s) \equiv 1$.

For the GFBM $X$, we have the kernel function
\begin{equation} \label{eqn-Kt}
K_t(s) = [( (t-s)_{+})^{\alpha} - ((-s)_{+})^{\alpha} ] \, \lvert s \rvert^{-\gamma/2}\, , \quad s \in \mathbb R , \, \, t \ge 0 , 
\end{equation}
and $N(\cdot) = B(\cdot)$ for $\alpha \neq 0$. When  $\alpha = 0 $, $X$ is the time-changed Brownian motion, and the kernel function becomes $K_{t}(s) = s^{-\gamma/2}$ for $s \ge 0$ and $K_{t}(s) \equiv 0 $   for $s < 0 $. 

{Thus, for $ \gamma \in [0,1)$ and $\alpha \in \big(-\frac{1}{2}+ \frac{\gamma}{2}, \ \frac{1}{2}+ \frac{\gamma}{2}\big) \setminus \{0\} $}, define the function 
$\,\Psi_{t}(\cdot) \,$ by 
\[
\Psi_{t}(s)  \, :=\, C_{t}^{-1} \, \big[ \alpha ( t - s)^{\alpha - 1 } s^{-\gamma/2} \cdot {\bf 1}_{ \{0 < s < t \}} + 
 \alpha ( t - s)^{\alpha - 1 } (-s)^{-\gamma/2} \cdot {\bf 1}_{ \{s < 0 \}} \big]  \, , 
 \]
 where $C_{t}$ is a time-dependent, normalizing constant defined by

 \[
 C_{t} :=   \alpha \,  t^{H} \, ( \text{Beta} (1 - \gamma, 2\alpha - 1) + \text{Beta} ( 1 - \gamma, 1 - 2 \alpha + \gamma)){^{1/2}}  \, , \quad t > 0 \, .  
 \]


 \begin{lemma} \label{lm: 3.1}
 The function $\,\Psi_{t}(\cdot) \,$  is square integrable with respect to the Lebesgue measure if $\, 1/2 < \alpha < (1+\gamma)/2\,$ and $\, \gamma \in (0, 1) \,$ (region (I) in Figure \ref{fig-1}(a)). In this case, $\int_{-\infty}^{\infty} \lvert \Psi_{t}(s)\rvert^{2} {\mathrm d} s = 1$.  The function $\,\Psi_{t}(\cdot) \,$  is not square integrable with respect to the Lebesgue measure if $\, \alpha \in (-1/2+\gamma/2,1/2) \setminus \{0\}\,$ and $\, \gamma \in (0, 1) \,$.  
 \end{lemma}
 
 \begin{proof}
 If $\, 1/2 < \alpha < (1+\gamma)/2\,$ and $\, \gamma \in (0, 1) \,$,  
 it follows from the definition that 
 \[
\alpha^{-2}C_{r}^{2} \int^{\infty}_{-\infty} \lvert \Psi_{r}(s)\rvert^{2} {\mathrm d}s 	= \int^{r}_{0} ( r-s)^{2(\alpha - 1)} s^{-\gamma}{\mathrm d} s + 
\int^{0}_{-\infty} ( r-s)^{2(\alpha - 1)} (-s)^{-\gamma}{\mathrm d} s \,  , 
 \]
 where the first and the second terms are rewritten by the change of variables as 
 \begin{equation} \label{eqn-Psi-intp-1}
 \begin{split}
 \int^{r}_{0} (r-s)^{2(\alpha - 1)} s^{-\gamma} {\mathrm d} s & \, =\, r^{2H} \int^{1}_{0} (1-u)^{2(\alpha - 1)} u^{-\gamma} {\mathrm d} u \\
 & \, =\, r^{2H}\text{Beta} ( 1-\gamma, 2 \alpha - 1)  \,, \\
 \int^{0}_{-\infty} (r-s)^{2(\alpha - 1)} (-s)^{-\gamma} {\mathrm d} s & \, =\, r^{2H} \int^{\infty}_{0} (1+u)^{2(\alpha - 1)} u^{-\gamma} {\mathrm d} u  \\
 & \, =\, r^{2H} \text{Beta} ( 1 - \gamma, 1 + \gamma - 2 \alpha ) \, . 
 \end{split}
 \end{equation}
Thus, in this case, $\int^{\infty}_{-\infty} \lvert \Psi_{t}(s)\rvert^{2} {\mathrm d} s \, =\,  1 $. 

On the other hand, if 
the parameters are $\, \alpha \in (-1/2+\gamma/2,1/2) \setminus \{0\}\,$ and $\,\gamma \in (0, 1)\,$,  
then these integrals over the interval $\,(0, 1)\,$ are not integrable.  Thus we conclude the proof. 
 \end{proof} 

\begin{proposition} \label{prop-sm-X}
The following properties hold:
\begin{itemize}
\item[(i)] If $\, \gamma \in (0, 1) \,$ and $\, 1/2 < \alpha < (1+\gamma)/2 \,$ (region (I) in Figure~\ref{fig-1}(a)), 
the GFBM $X$ in \eqref{def-X} is a semimartingale with respect to $\, \mathcal F^{B}(\cdot)\,$,  more specifically, a process of finite variation: 
\begin{equation} \label{eq: dvofX}
\frac{\,{\mathrm d} X(t)\,}{\,{\mathrm d} t \,} \, =\,
c \int^{t}_{-\infty}\Psi_{t}(s)  {\mathrm d} B(s) \, , \quad t \ge 0 \,, 
\end{equation}
where $\,\Psi_{t}(\cdot) \,$ is defined by 
\begin{equation} \label{eq: Psi_{t}(s)}
\Psi_{t}(s)  \, :=\, \alpha ( t - s)^{\alpha - 1 } \lvert s \rvert^{-\gamma/2} 
 \end{equation}
for $\, s < t \,$, and $\, \Psi_{t}(s)  :=  0 \,$ for $\, s > t \,$.

\item [(ii)] 
{If $\, \gamma \in (0, 1) \,$ and $\, \alpha  \in \big(-\frac{1}{2}+ \frac{\gamma}{2}, \ \frac{1}{2}\big) \setminus \{0\}  \,$,  
the GFBM $X$  in \eqref{def-X} is not a semimartingale with respect to $\, \mathcal F^{B}(\cdot)\,$. } 
In particular, if $\gamma=2 \alpha \in (0,1)$ with $H=1/2$, then it is not a semimartingale with respect to $\, \mathcal F^{B}(\cdot)\,$.

\item[(iii)] {If $\gamma \in [0,1)$ and  $\alpha =0$,  the GFBM $X$, as given in \eqref{eqn-X-alpha0}, is a 
 square-integrable martingale (hence a semimartingale)  with respect to $\, \mathcal F^{B}(\cdot)\,$.} 

\item[(iv)] If $\, \gamma = 	0 \,$ and $\, \alpha \in (-1/2, 0) \cup (0, 1/2) \,$, 
the GFBM $X$ in \eqref{def-X} is reduced to a fractional Brownian motion and is not semimartingale.

\end{itemize}
\end{proposition}

%
%
%
%
%

\begin{remark} \label{rem-sm}
In the range (region (I) of Figure~\ref{fig-1}(a)) of parameters  $(\alpha, \gamma)$: $\, \gamma \in (0, 1) \,$ and $\, 1/2 < \alpha < (1+\gamma)/2 \,$, the Hurst parameter $H = \alpha - \frac{\gamma}{2} + \frac{1}{2} \in \big(\frac{1}{2}, 1 \big)$. 
Note that when $\alpha$ is close to $1/2$, and $\gamma$ is close to $1$, the Hurst parameter $H$ is also close to $1/2$, which differs from the standard FBM case with $\gamma =0$ and $\alpha=0$ resulting in $H=1/2$. 
In the line segment $\gamma \in [0,1)$ and  $\alpha =0$, the Hurst parameter $H \in (0,1/2]$. 

Therefore, the range of values of Hurst parameter $H$ possessing the semimartingale property is expanded from a single value $1/2$ for the standard FBM,  { to the entire interval $(0,1/2)$} 
for the process $X(t)$.
{It is important to highlight that the process $X(t)$ is a process of finite variation in region (I) with $H\in (1/2,1)$, and in the line segment $\gamma \in (0,1)$ and  $\alpha =0$, $X(t)$ becomes a time-changed Brownian motion (as given in \eqref{eqn-X-alpha0}), with self-similar parameter  $H \in (0,1/2)$.} 

Moreover, note that $H=1/2$ here only corresponds to the singular point $\alpha=0, \gamma=0$. The GFBM $X$ can have $H=1/2$ on the line segment $\alpha =\gamma/2$, which is a BM if and only if $\alpha=0$. 
\end{remark}

\begin{remark}[Differentiability]  It is shown in \cite{ichiba-pang-taqqu} that the regions (I) and (II) of Figure~\ref{fig-1}(a) correspond to the regions of almost sure differentiable and non-differentiable paths, respectively,   that is, in the region (I) the sample path of GFBM is differentiable, while in region (II) the sample path of GFBM is not differentiable. This is not just a coincidence but it turns out that when $\,\alpha > 1/2\,$, it is a semimartingale and its (local) martingale part in the semimartingale decomposition is zero, and its finite variation part is the integral of a Gaussian process. 
When $\,\alpha \, =\,  \gamma =0\,$, it is a {standard} Brownian motion with non-differentiable sample path. 
{When  $\gamma \in (0,1)$ and  $\alpha =0$, $X$ is a time-changed Brownian motion, also with non-differentiable sample path. } 
\end{remark}

\begin{proof}

(i) Let us consider the case $\, 1/2 < \alpha < (1+\gamma)/2\,$ and $\, \gamma \in (0, 1) \,$. Thanks to the integrability of \eqref{eqn-Psi-intp-1}
in this parameter set, the square integrability of $\, \Psi_{t}(\cdot)\,$ is assured, by Lemma \ref{lm: 3.1}, and hence, by Theorem 4.6 of Basse (2009), we have the representation 
\begin{equation} \label{eqn-X-regionI}
X(t) \, =\,  c \int^{t}_{0} \Big( \int^{r}_{-\infty} \Psi_{r}(s) {\mathrm d} B(s) \Big)  {\mathrm d} r  \,, \quad t \ge 0 \, , 
\end{equation}
where  $\,\Psi_{t}(\cdot) \,$ is defined in \eqref{eq: Psi_{t}(s)}. 
Thus it is a process of finite variation with the first derivative \eqref{eq: dvofX}, in particular, it is a semimartingale. See also section 4 of \cite{ichiba-pang-taqqu}. 

\smallskip


(ii) We show the claim by contradiction. 
Suppose that $\, X(\cdot) \,$ in \eqref{def-X} is a semimartingale with respect to $\, \mathcal F^{B}(\cdot)\,$. We know $\, \mathbb E [ X^{2}(t)] \, =\, 
t^{2\alpha - \gamma +1}\,$ for $\, t \ge 0 \,$. 
Then by Theorem 4.6 of \cite{basse2009spectral} 
 with $\,N_{\cdot} \, =\,B(\cdot)  \,$, $\,g(s) \, :=\, 0\,$, $\, C_{t}\, =\,  (-\infty, t ]\,$, 
there is a canonical decomposition  
\begin{equation}
\begin{split}
\, X(t) \, =\, &\int^{t}_{-\infty} K_{t}(s) {\mathrm d} B(s) \, =\,  \int^{t}_{-\infty} [ ( (t-s)_{+})^{\alpha} - ((-s)_{+})^{\alpha} ] \, \lvert u \rvert^{-\gamma/2} {\mathrm d} B(s) 
\end{split}
\end{equation} 
for $\, t \ge 0 \,$, where the integrand $\, K_{t}(s) \,$ has the form: 
\begin{equation} \label{eq: Kts} 
K_{t}(s) \, :=\, g(s) + \int^{t}_{0} \Psi_{r}(s) \mu ({\mathrm d} r  )  \,, \quad 0 \le s \le t \, . 
\end{equation}
Here $\, g(\cdot) \,$ is square integrable with respect to the Lebesgue measure, $\, \mu (\cdot) \,$ is a Radon measure on $\, \mathbb R_{+}\,$, and $\, \Psi_{t}(s) \,$ is a measurable mapping satisfying
\begin{equation} \label{eq: BasseTHM4.6}
\int^{\infty}_{-\infty}  \lvert \Psi_{t}(s) \rvert^{2} {\mathrm d} s \, =\,  1 \, , \quad \text{ and } \quad \Psi_{t}(s) \equiv 0 \,  \, \,   (s> t)  \, . 
\end{equation}

Taking derivatives with respect to $\, t\,$ in (\ref{eq: Kts}), we have 
\[
\frac{\,{\mathrm d} K_{t}(s)\,}{\,{\mathrm d} t \,}  \, =\,  \Psi_{t}(s) \cdot \frac{\, \mu ( {\mathrm d} t ) \,}{\, {\mathrm d} t \,} \, , 
\]
while the integrand $\,K_{t}(s)\,$ in \eqref{eqn-Kt} 
has the derivative with respect to $\,t \,$: 
\[
\frac{\,{\mathrm d} K_{t}(s) \,}{\,{\mathrm d} t \,} \, =\, 
\alpha ( t - s)^{\alpha - 1 } \lvert s \rvert^{-\gamma/2}\,. 
 \]
 Thus by comparing these two expressions and by setting $\,\mu ( {\mathrm d} r ) \, =\,  {\mathrm d} r \,$
 as the Lebesgue measure, we identify $\, \Psi_{t}(s) \,$ as in \eqref{eq: Psi_{t}(s)}. 
However, as in Lemma \ref{lm: 3.1}, if $\,  \alpha \le 1/2\,$ and $\, \gamma \in (0, 1)\,$, $\, \Psi_{t}(\cdot) \,$ is not square integrable for every $\,t > 0\,$.
This yields a contradiction to (\ref{eq: BasseTHM4.6}). Thus, we claim that $\, X(\cdot) \,$ in \eqref{def-X} is not semimartingale with respect to $\, \mathcal F^{B}(\cdot)\,$, if $\, \gamma \in (0, 1)\,$ and $\, \alpha \le 1/2\,$. The second statement on the parameter sets $\, \gamma \, =\,  2 \alpha \in (0, 1) \,$ with $\, H \, =\,  1/ 2\,$ is proved as a special case. 

\smallskip

(iii) The claim follows from the expression of $X(t)$ in  \eqref{eqn-X-alpha0}. 
\smallskip

 (iv) When $\, \gamma \, =\,  0 \,$, $\, X(\cdot) \,$ in \eqref{def-X} is a FBM with Hurst index $\, \alpha + 1/2\,$, and it is not a semimartingale if $\,\alpha \in (-1/2, 0) \cup (0, 1/2)\,$.
\end{proof}


\bigskip

\medskip

\subsection{Alternative proof of the non-semimartingale property by quadratic variations of the GFBM}
\label{sec-var-sm}

For the standard FBM $B^H$, it is shown in \cite[Proposition 3.14]{MR1785063} that
$B^H$ has a $1/H$-variation, that is, 
$$
\lim_{\varepsilon \downarrow 0} \int_0^t \frac{\,1\,}{\,\varepsilon\,} \lvert B^H(s+\varepsilon) - B^H(s)\rvert^{1/H} {\mathrm d} s \,= \,  \varrho_H t\,
$$
in the sense of convergence uniformly in compact sets, 
where  $\varrho_H = \E[|Z|^{1/H}]$ for $Z\sim N(0,1)$. Here, the limit is in the sense of convergence in probability uniformly on every compact interval (\text{ucp}).  It can be also shown that the classical variation
$$
\sum_{i=0}^{n-1} |B^H_{t_{i+1}} - B^H_{t_i}|^{1/H} \xrightarrow[n \to\infty ]{L^{1}}\, \varrho_H t\,,
$$
where $0=t_0 < \dots < t_n =t$ is a partition of $[0,t]$, see Proposition 3.14 in \cite{MR1785063} and Remark 1 in \cite{MR2218338}.
Then by Propositions 1.9 and 1.11 of \cite{cheridito2001regularizing} one can conclude that $B^H$ is not a semimartingale with respect to $\, \mathcal F^{B}(\cdot)\,$, if $H < 1/2$. Note that the results in \cite{cheridito2001regularizing} involve the notion of the weak semimartingales (see Definition 1.5 of \cite{cheridito2001regularizing}).

This approach of evaluating the variations is also used in \cite{MR2218338} to show that the bifractional Brownian motion $B^{H,K}$ with parameters $\,(H, K)\,$, $\,H \in (0, 1)\,$, $\,K \in (0, 1]\,$ is not a semimartingale, if $HK\neq 1/2$, see Propositions 1--3 and Remark 1 there. Recall that the bifractional Brownian motion $\,B^{H,K}\,$ is a centered Gaussian process with $\,B^{H,K}(0) \, =\,  0 \,$ and covariance function 
\[
\mathbb E [ B^{H,K}(s) \, B^{H,K}(t) ] \, =\,  \frac{\,1\,}{\,2^{K}\,} ( ( t^{2H} + s^{2H})^{K} - \lvert t - s \rvert^{2\, H\, K} ) \,, \quad s, t \ge 0 \, . 
\]
The bifractional Brownian motion $\,B^{H,K}\,$ is a FBM with Hurst index $\,H \in (0, 1) \,$ if $\, K \, =\,  1\,$. 

We next use this variation approach to provide an alternative proof for the non-semimartingale property in 
Proposition \ref{prop-sm-X} (ii) but only restricting to the parameter range $\gamma \in (0,1)$ and   $\alpha \in (-1/2+\gamma/2, 0)$, under which the Hurst parameter $H$ takes values in $(0,1/2)$  (see also Remark \ref{rem-pw}).

\begin{proposition} \label{prop-sm-X-2}
The GFBM $X$ in \eqref{def-X} is not a semimartingale with respect to its own filtration $\, \mathcal F^{X}(\cdot)\,$ 
if 
$\gamma \in (0,1)$ and 
$\alpha \in (-1/2+\gamma/2, 0)$. 
Moreover, 
since $\mathcal F^{X}\subset \mathcal F^{B}$, it is not a
 semimartingale with respect to $\, \mathcal F^{B}(\cdot)\,$ if 
{$\gamma \in (0,1)$ and $\alpha \in (-1/2+\gamma/2, 0)$.
 }
\end{proposition}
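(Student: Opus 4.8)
The plan is to derive a contradiction from the $1/H$-variation result of Proposition \ref{prop: variation}, using the classical fact that a continuous semimartingale cannot carry a strictly positive $q$-variation for any exponent $q>2$. Since $H<1/2$ forces $q:=1/H>2$, and Proposition \ref{prop: variation} identifies the $1/H$-variation of $X$ as the strictly positive quantity $\rho_{\alpha,\gamma}\,t$, the process $X$ will be seen to violate a defining regularity property of semimartingales.

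First I would argue by contradiction: suppose $X$ is a semimartingale with respect to $\mathcal F^{X}(\cdot)$. Because $X$ has continuous sample paths (property (iii) in Section \ref{sec-GFBM-review}), it is then a continuous semimartingale and admits a canonical decomposition $X=M+A$, where $M$ is a continuous local martingale and $A$ is a continuous adapted process of finite variation, both null at the origin. The next step is a standard estimate showing that, along any sequence of partitions $\pi_{n}$ of $[0,t]$ with mesh $|\pi_{n}|\to 0$, the $q$-variation sums of a continuous semimartingale vanish in probability whenever $q>2$: for the finite-variation part one bounds $\sum|A_{t_{i+1}}-A_{t_{i}}|^{q}$ by $(\max_{i}|A_{t_{i+1}}-A_{t_{i}}|)^{q-1}$ times the total variation of $A$, which tends to $0$ for $q>1$; for the martingale part one bounds $\sum|M_{t_{i+1}}-M_{t_{i}}|^{q}$ by $(\max_{i}|M_{t_{i+1}}-M_{t_{i}}|)^{q-2}\sum|M_{t_{i+1}}-M_{t_{i}}|^{2}$, where the maximal increment tends to $0$ by continuity while the quadratic sums converge to the finite bracket $[M]_{t}$. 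A routine localization handles the local (rather than true) martingale part.

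I would then apply this with $q=1/H>2$. On the one hand, the estimate just described gives $\sum_{i}|X(t_{i+1})-X(t_{i})|^{1/H}\to 0$ in probability. On the other hand, \eqref{eq: prop variation 2} gives $\sum_{i}|X(t_{i+1})-X(t_{i})|^{1/H}\to\rho_{\alpha,\gamma}\,t$ in $L^{1}$, hence in probability, and $\rho_{\alpha,\gamma}\,t>0$ since $c>0$ and the Beta and Gamma factors in the definition of $\rho_{\alpha,\gamma}$ are strictly positive throughout the region $\gamma\in(0,1)$, $\alpha\in(-1/2+\gamma/2,\gamma/2)$. These two limits are incompatible, which contradicts the assumption and shows that $X$ is not an $\mathcal F^{X}(\cdot)$-semimartingale. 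Finally, since $X$ is $\mathcal F^{X}(\cdot)$-adapted and $\mathcal F^{X}\subset\mathcal F^{B}$, Stricker's theorem on the stability of the semimartingale property under shrinking of the filtration implies that an $\mathcal F^{B}(\cdot)$-semimartingale would restrict to an $\mathcal F^{X}(\cdot)$-semimartingale; the previous conclusion therefore also rules out the $\mathcal F^{B}(\cdot)$-semimartingale property.

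The main obstacle I anticipate is matching modes of convergence and isolating the correct classical input: Proposition \ref{prop: variation} delivers the critical $1/H$-variation as an $L^{1}$ (hence in-probability) limit, so one must compare it against the in-probability statement that the $q$-variation of a continuous semimartingale vanishes for $q>2$, rather than against an almost-sure statement; the divergence form \eqref{eq: prop variation 3} at $p=2$ is less convenient here precisely because it only asserts divergence of the $L^{1}$ norm and not convergence in probability to $+\infty$. This is exactly the point where the analogous FBM argument invokes Propositions 1.9 and 1.11 of \cite{cheridito2001regularizing}; the content needed here is the elementary $q$-variation estimate above, whose validity does not rely on stationary increments and therefore transfers to the GFBM once Proposition \ref{prop: variation} is in hand.
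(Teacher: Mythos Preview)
Your argument is correct, and it differs from the paper's proof in a meaningful way. The paper invokes \eqref{eq: prop variation 3} with $p=2<1/H$ (the quadratic-variation sums diverge in $L^{1}$) and then cites Propositions~1.9 and~1.11 of \cite{cheridito2001regularizing} on weak semimartingales to conclude; you instead use \eqref{eq: prop variation 2} at the critical exponent $q=1/H>2$ (the $1/H$-variation sums converge in $L^{1}$, hence in probability, to the strictly positive constant $\rho_{\alpha,\gamma}t$) together with the elementary estimate that the $q$-variation of a continuous semimartingale vanishes in probability for every $q>2$. Your route is more self-contained---it reproduces, from first principles, exactly the piece of Cheridito's machinery that is needed---and, as you note, it sidesteps the mode-of-convergence issue you raise about \eqref{eq: prop variation 3}: divergence of the $L^{1}$ norm is not immediately divergence in probability, so one really does need the weak-semimartingale framework (or your alternative) rather than the naive statement ``a semimartingale has finite quadratic variation.'' The paper's version is shorter on the page because it outsources this step to the reference; yours makes the mechanism transparent. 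Your invocation of Stricker's theorem for the passage from $\mathcal F^{B}$ to $\mathcal F^{X}$ is the standard justification and matches what the paper tacitly uses.
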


\begin{proof}

We write 
\[
X(t) = X_{-}(t) + X_{+}(t),
\]
where
\[
X_{-}(t) = \int_{-\infty}^0 \left( (t-u)^{\alpha} - (-u)^{\alpha} \right)    (-u)^{-\gamma/2} B({\rm d}u)\,,
\]
and
\[
X_{+}(t) = \int_0^t (t-u)^\alpha u^{-\gamma/2}B({\rm d}u)\,. 
\]
Recall that we focus on the parameter range: $\gamma \in (0,1)$ and $\alpha \in (-1/2+\gamma/2, 0)$. In this parameter range, by Lemma 2.1 (valid for all parameter ranges) in \cite{wang2021exact} 
for $0<s <t$,
\[
c_{2,1}\frac{(t-s)^2}{t^{2-2H}} \le \E[(X_{-}(t)  - X_{-}(s) )^2] \le  c_{2,2}\frac{(t-s)^2}{s^{2-2H}}\,,
\]
and by Lemma 3.1 (valid for $\alpha \le 1/2$) in \cite{wang2021exact} , for $s \le 2(t-s)$, 
\[
c_{3,1}\frac{(t-s)^{2\alpha+1}}{t^{\gamma}} \le \E[(X_{+}(t)  - X_{+}(s) )^2] \le  c_{3,2}\frac{(t-s)^{2\alpha+1}}{s^{\gamma}}\,,
\]
and for $t>s >2(t-s)$, 
\[
\E[(X_{+}(t)  - X_{+}(s) )^2] \asymp \frac{(t-s)^{2\alpha+1}}{s^\gamma}\,,
\]
where if $f,g$ are real-valued function on a set $I$, the notation $f\asymp g$ means that $c\le f(x)/g(x)\le c'$ for all $x\in I$ and some positive and finite constants $c,c'$ (which may depend on $f,g$ and $I)$. 

Then we have the lower bound 
\[
\mathbb E[ (X_{+}(u+\varepsilon) - X_{+}(u))^{2}] \ge c \cdot \frac{\,\varepsilon^{2\alpha+1}\,}{\,(u+\varepsilon)^{\gamma}\,} 
\] 
with some constant $\,c\,$, and hence, 
\[
\int^{T}_{0} \mathbb E [ (X_{+}(u+\varepsilon) - X_{+}(u))^{2} ] {\mathrm d} u \ge c \varepsilon^{2\alpha+1} \cdot \frac{\, [ ( T+\varepsilon)^{1-\gamma} - \varepsilon^{1-\gamma}]\,}{\, 1 - \gamma \,} \, . 
\]
This implies that 
\[
\lim_{\varepsilon \downarrow 0 } \frac{\,1\,}{\,\varepsilon\,} \int^{T}_{0} \mathbb E [ (X_{+}(u+\varepsilon) - X_{+}(u))^{2} ] {\mathrm d} u  \, =\, 
\infty, \quad  \text{ if } \alpha  < 0 \, . 
\]
Thus, if $\,\alpha < 0 \,$, the quadratic variation of $\,X\,$ in \eqref{def-X} does not converge and not well-defined. Therefore, it is not semimartingale. 
\end{proof}

\begin{remark} \label{rem-pw}
When $\gamma \in (0,1)$ and $\, \alpha \in (0, 1/2)\,$, which is part of the parameter range of Proposition \ref{prop-sm-X} (ii) for the non-semimartingale property,  it is not clear if this power variation approach in Proposition \ref{prop-sm-X-2} works. Note that in this parameter range, the Hurst parameter $H$ can take value in $(0,1)$, in particular, 
 $H\in (0,1/2)$ if $\alpha \in (0, \gamma/2)$ and $H\in [1/2,1)$ if $\alpha \in [\gamma/2, 1/2)$. 
A careful study of the power variations of the GFBM is worth studying in a future work. 
\end{remark}

\bigskip

\section{Mixed BM and GFBM}  \label{sec-mix}

In this section we consider the semimartingale properties of the following process 
\begin{equation} \label{eq: Y}
Y(t) = \widetilde{B}(t)+ X(t), \quad t\ge 0,
\end{equation}
where $\widetilde{B}(t)$ is a standard Brownian motion and $X(t)$ is the GFBM defined in \eqref{def-X}, independent of $\widetilde{B}(t)$. Let us call $\,Y\,$ the {\em mixed GFBM}.  {In the special case when $\gamma \in [0,1)$ and $\alpha=0$,  $X(t)$ is given in \eqref{eqn-X-alpha0}, and hence
\begin{equation} \label{eqn-Y-alpha0}
Y(t) =  \widetilde{B}(t)+ c \int_0^t  u^{-\gamma/2} B({\rm d}u),
\end{equation}
where $B(t)$ is a standard Brownian motion, independent of $\widetilde{B}(t)$. Recall the corresponding Hurst parameter $H \in (0,1/2]$. In this special case, the mixed process $Y(t)$ is clearly a martingale. In the rest of the section, we focus on the other parameter ranges that make the mixed GFBM a semimartingale. We will show below that in Proposition \ref{prop-mixture}, when $\gamma/2 < \alpha <1/2+ \gamma /2 $ and $\, 0 <\gamma <1 \,$, i.e., regions (I) and (II)-1 in Figure~\ref{fig-1}(b) (both resulting $H\in (1/2,1)$), the mixed GFBM a semimartingale.   In summary,  combining with the case in \eqref{eqn-Y-alpha0}, our results show that the mixed GFBM can be a semimartingale for any value $H \in (0,1)$ (given the parameters $(\alpha, \gamma)$ in the two ranges). 
 }

In the case of FBM $B^H$, we denote
$$
Y^H(t) = \widetilde{B}(t) + B^H(t)\, , \quad H \in (0,1)\, ,  \quad t \ge 0\, .
$$
It is shown in \cite[Theorem~1.7]{cheridito2001mixed} and \cite[Theorem~2.7]{cai2016mixed} (see also \cite{bender2011fractional}) that $Y^H(t)$ is a semimartingale with respect to its own filtration if and only if $H \in \big\{\frac{1}{2}\big\} \cup \big(\frac{3}{4}, 1\big]$.  {On the other hand, the mixed GFBM extends the Hurst parameter values to the full interval $H\in (0,1)$, which further indicates the flexibility by introducing the parameter $\gamma$ in the generalization.}

{To prove the results for the mixed FBM, }
in \cite{cheridito2001mixed}, the concept of weak semimartingale  and a theorem on Gaussian processes in \cite{stricker1984quelques} is used.  On the other hand, 
in \cite{cai2016mixed}, the filtering approach is used. 
In particular, the mixed FBM $Y^H$ is innovated by a martingale in its natural filtration for all $H \in (0,1]$. Then the equivalence property with respect to 
the Wiener measure is established for $H \in (3/4, 1]$ and the equivalence property with respect to  
the Wiener measure is established for $H \in (0,1/4)$. The associated  Radon-Nikodym density formulas are then derived in these ranges of the parameter $H$. 
The method in Section \ref{sec-sm} is not directly applicable for the mixed GFBM in the region (II)-1 (in particular, Lemma 1 only applicable to region (I)). We will use a different approach using Shepp's result for general Gaussian processes  \cite{shepp1966radon} since the Randon-Nikodym derivative of the GFBM exists in the region (II)-1 (see further discussions in Remark \ref{rem-5.3}).

Let $\mu^{Y,H}$ be the probability measure induced by $Y^H$ on the space of its paths in $\CC(\RR_+;\RR)$, and $\mu^B$ be the Wiener measure. 
For $H>1/2$, the covariance function of $B^H(t)$ in \eqref{eqn-cov-FBM} is written as 
\begin{align} \label{eqn-Psi-H}
\Psi^H(t,s) =\E\bigl[B^H(t) B^H(s)\bigr] = \int_0^t \int_0^s K^H(u,v){\rm d}u {\rm d}v,
\end{align}
where 
\begin{align} \label{eqn-K-H}
K^H(t,s) = \frac{\partial^2}{\partial t \partial s}   E\bigl[B^H(s) B^H(t)\bigr]  = c_H |t-s|^{2H-2} . 
\end{align}
with $c_{H}:= c^2 H(2H-1)$. 
If $H> 3/4$,  $K^H(\cdot,\cdot) \in L^2([0,T]^2)$, and $\mu^{Y,H}\sim \mu^B$ (equivalence) by the general criterion in Shepp \cite{shepp1966radon}, and in addition,  Shepp's Radon--Nikodym derivative can be written in the form 
\begin{align*}
\frac{d \mu^{Y,H}}{d \mu^B}(Y^H) = \exp \left( - \int_0^T \varphi_t(Y^H) {\rm d}Y^H(t) - \frac{1}{2} \int_0^T \varphi_t^2(Y^H) {\rm d}t\right) \, , 
\end{align*}
where 
\begin{equation} \label{eqn-varphi-H}
\varphi_{t}(Y^{H}) \, :=\,  \int^{t}_{0} L^{H}(s,t) {\mathrm d} Y^{H}(s) \,, \quad 0 \le t \le T\,  
\end{equation}
and $L^H \in L^2([0,T]^2)$ is the unique solution of the Wiener--Hopf integral equation
\begin{align*}
L^H(s,t) + c_{H} \int_0^t L^H(r,t)|r-s|^{2H-2} {\rm d}r = - c_{H} |s-t|^{2H}, \quad 0 \le s \le t \le T\,. 
\end{align*}

The second partial derivative  $\,K(u, v ) \,$ of the covariance function $\Psi$ in \eqref{eqn-Psi} is given by 
\begin{equation} \label{eq: lm4.1.1}
K(u, v) \, =\,  \frac{\partial^{2} \Psi}{\partial u \partial v}(u,v )  =\,  c^{2}(f_{1}(u\wedge v,u \vee v) + f_{2}(u \wedge v , u \vee v) ) \,,
\end{equation} 
where
\begin{equation*}
\begin{split}
f_{1}(u,v) \, :=&\,  \int^{u}_{0} (v-\theta)^{\alpha -1} (u-\theta)^{\alpha-1} \theta^{-\gamma} {\mathrm d} \theta \,,  \\
f_{2}(u,v) \, :=& \,  \int^{\infty}_{0} (v+\theta)^{\alpha-1}(u+\theta)^{\alpha-1} \theta^{-\gamma} {\mathrm d} \theta \,. 
\end{split}
\end{equation*} 

We then obtain the following square--integrability property of $K(u,v)$. Its proof is given in Section~\ref{sec-app-2}. 

\begin{lemma} \label{lm:sq-integrability} 
Assume $\, 0 < \gamma < 1 \,$ and $\, (\gamma  -1 )/2 < \alpha < (\gamma +1)/2\,$. The function $\,K(\cdot, \cdot) \,$  in \eqref{eq: lm4.1.1} is  square integrable with respect to the Lebesgue measure   in $\,(0, T) \times (0, T) \,$ for every $\, T > 0 \,$, if and only if $\, 0 <\gamma <1 \,$ and $\, \gamma/2 < \alpha <1/2+ \gamma /2\,  $ (equivalently, $H \in (1/2,1)$,  regions (I) and (II)-1 in Figure~\ref{fig-1}(b)). 

%

\end{lemma}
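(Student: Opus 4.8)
The plan is to convert the exact self-similarity of $X$ into an exact homogeneity of $K$, thereby collapsing the two-dimensional square-integrability question to a one-dimensional one, and then to read off the admissible parameters from the endpoint asymptotics of a single profile function. Since $X$ is $H$-self-similar, $\Psi(\kappa s,\kappa t)=\kappa^{2H}\Psi(s,t)$, and differentiating once in $s$ and once in $t$ gives $K(\kappa u,\kappa v)=\kappa^{2H-2}K(u,v)$; that is, $K$ is homogeneous of degree $2H-2$. Writing $g(x):=K(x,1)=c^{2}\alpha^{2}\big(f_{1}(x,1)+f_{2}(x,1)\big)$ for $0<x<1$ and using $K(u,v)=K(v,u)$, the substitution $u=vx$ factorizes the norm as
\begin{equation*}
\int_{0}^{T}\!\!\int_{0}^{T} K(u,v)^{2}\,{\mathrm d}u\,{\mathrm d}v \;=\; 2\int_{0}^{T} v^{4H-3}\,{\mathrm d}v\cdot\int_{0}^{1} g(x)^{2}\,{\mathrm d}x .
\end{equation*}
The radial factor is finite if and only if $4H-3>-1$, i.e.\ $H>1/2$; since the angular factor is strictly positive ($g>0$), this already makes $H>1/2$ necessary and isolates one half of the problem. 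As a consistency check, for $\gamma=0$ the same scheme returns $g(x)=c_{H}(1-x)^{2H-2}$ and recovers the classical FBM threshold $H>3/4$.

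It then remains to characterize when $\int_{0}^{1} g(x)^{2}\,{\mathrm d}x<\infty$, which I would settle through the endpoint behaviour of $g$. Near $x\uparrow1$ the term $f_{2}(x,1)$ stays bounded, whereas in $f_{1}(x,1)=\int_{0}^{x}(1-\theta)^{\alpha-1}(x-\theta)^{\alpha-1}\theta^{-\gamma}\,{\mathrm d}\theta$ the substitution $\theta=x-(1-x)\eta$ concentrates the mass at $\theta=x$ and yields, for $0<\alpha<1/2$, $f_{1}(x,1)\sim \mathrm{Beta}(\alpha,1-2\alpha)\,(1-x)^{2\alpha-1}$ (a logarithmic rate at $\alpha=1/2$, and a finite limit for $\alpha>1/2$). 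Thus $g^{2}$ is integrable at $x=1$ exactly when $2(2\alpha-1)>-1$, i.e.\ $\alpha>1/4$. Near $x\downarrow0$, the scalings $\theta=x\phi$ in $f_{1}$ and $\theta=x\eta$ in $f_{2}$ give $f_{1}(x,1)\sim \mathrm{Beta}(1-\gamma,\alpha)\,x^{\alpha-\gamma}$ and show that $f_{2}(x,1)$ is bounded for $\alpha>\gamma$ and of order $x^{\alpha-\gamma}$ otherwise, so $g^{2}$ is integrable at $0$ iff $2(\alpha-\gamma)>-1$, which is automatic once $H>1/2$ and $\gamma<1$. Collecting the radial condition together with the two endpoint conditions then delimits the region in which $K$ is square integrable.

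The step I expect to be the main obstacle is the diagonal asymptotic of $f_{1}$: the relation $f_{1}(x,1)\asymp(1-x)^{2\alpha-1}$ must be turned into a genuine two-sided bound, uniform as $x\uparrow1$, because it is the lower bound that drives the necessity direction. Here the positivity of both $f_{1}$ and $f_{2}$ is essential — there is no cancellation, so a lower bound on $f_{1}$ alone lower-bounds $g$ and hence the whole integral. Two further technical matters need care: justifying the double differentiation under the integral sign that produces \eqref{eq: lm4.1.1} (the boundary contribution at $\theta=u$ vanishes since $\alpha>0$), and stitching the local diagonal estimate at $x=1$, the origin estimate at $x=0$, and the smooth behaviour on compact subintervals of $(0,1)$ into the single profile $g$ without losing control at the endpoints.
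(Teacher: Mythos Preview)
Your homogeneity-and-factorization scheme is correct and more transparent than the paper's direct bounds: the identity $\int_{0}^{T}\!\int_{0}^{T}K^{2}=2\big(\int_{0}^{T}v^{4H-3}\,{\mathrm d}v\big)\big(\int_{0}^{1}g(x)^{2}\,{\mathrm d}x\big)$ cleanly separates the radial condition $H>1/2$ from the angular profile, and your diagonal asymptotic $f_{1}(x,1)\asymp(1-x)^{2\alpha-1}$ (for $0<\alpha<1/2$) is also right. The problem is that you stop at ``collecting the conditions delimits the region'' without actually writing down that region and comparing it to the lemma. Your own analysis gives \emph{three} constraints --- $H>1/2$, $\alpha>1/4$ from $x\uparrow1$, and $\alpha>\gamma-1/2$ from $x\downarrow0$ --- and the middle one is \emph{not} implied by $H>1/2$ when $\gamma<1/2$. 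Your FBM consistency check ($\gamma=0$ gives $H>3/4$) should have been a warning sign: the lemma asserts a threshold that jumps discontinuously from $H>3/4$ at $\gamma=0$ to $H>1/2$ for every $\gamma>0$, whereas your computation yields the continuous interpolation $H>\max(1/2,\,3/4-\gamma/2)$.

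The discrepancy is real, and it lies in the paper, not in your argument. The paper's sufficiency step bounds $f_{1}(u,v)\le\int_{0}^{u}v^{\alpha-1}(u-\theta)^{\alpha-1}\theta^{-\gamma}\,{\mathrm d}\theta$ by replacing $(v-\theta)^{\alpha-1}$ with $v^{\alpha-1}$; but since $\alpha<1$ throughout the admissible range, the exponent $\alpha-1$ is negative and this inequality goes the wrong way (indeed $f_{1}(u,v)\to\infty$ as $u\uparrow v$ whenever $\alpha<1/2$, while the claimed bound stays finite). So for $0<\gamma<1/2$ and $\gamma/2<\alpha\le1/4$ the lemma as stated is false: $K\notin L^{2}((0,T)^{2})$ there, exactly as your diagonal estimate shows. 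Your proof proposal is therefore sound as a method but cannot establish the lemma in the form written; what it proves is the corrected characterization $K\in L^{2}\iff H>1/2$ and $\alpha>1/4$.
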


 \begin{remark} \label{rem-5.3}
 We remark that both regions (I) and (II)-1 lead to Hurst parameter $H\in (1/2,1)$. 
The previous lemma shows the integrability of the function $\,K(\cdot, \cdot) \,$  in \eqref{eq: lm4.1.1}, from which we can conclude the absolute continuity of the $Y$ with respect to the  Brownian motion $\, \widetilde{B}(\cdot)\,$ and obtain an expression of the Radon-Nikodym density as a direct consequence of Shepp's result for general Gaussian processes in  \cite{shepp1966radon}.  That will involve the  Wiener--Hopf integral equation. 

 However, the two regions (I) and (II)-1 also have distinct behaviors, despite  the same Hurst parameter range. In particular, in region (I), we have shown in Proposition~\ref{prop-sm-X} that the process $X$ is of finite variation, with the representation in \eqref{eqn-X-regionI}, which makes $Y$ a Brownian motion with a random drift of finite variation.  As a consequence, we are able to provide a more explicit  expression of the Radon-Nikodym density using conditional expectations.
 
We will next state these results in two propositions.  It is an open problem to show that the explicit  Radon-Nikodym density in region (I) is equivalent to the density given by the Wiener--Hopf integral equation. 
 \end{remark}

%

Suppose  that $\gamma/2 < \alpha <1/2+ \gamma /2 $ and $\, 0 <\gamma <1 \,$. Let $\,L(s, t) \in L^{2}([0, T]^{2}) \,$  be the unique solution to the Wiener--Hopf integral equation 
\begin{equation} \label{eqn-WH}
L(s, t) + \int^{t}_{0} L(r, t ) K(r, s ) {\mathrm d} r \, =\,  - K ( s, t) \,, \quad 0 \le s \le t \le T \, , 
\end{equation}
and define 
\begin{equation} \label{eq: phiY}
\, \varphi_{t} (Y) \, :=\, \int^{t}_{0} L(s, t) {\mathrm d} Y (s) \,, \quad 0 \le t \le T \, . 
\end{equation}
Also, let $\, \ell (s, t) \in L^{2}([0, T]^{2}) \,$ be the unique solution to the Volterra equation 
\begin{equation} \label{eqn-WH-V}
\ell (s, t) + \int^{t}_{s} \ell (r, t) L(s, r) {\mathrm d} r \, =\, L(s , t) \,, \quad 0 \le s \le t \le T \, . 
\end{equation}
Thus, by \cite{shepp1966radon} and Lemma \ref{lm:sq-integrability}, we obtain the following proposition.  
\begin{proposition}
\label{prop-mixture}
Suppose that $\gamma/2 < \alpha <1/2+ \gamma /2 $ and $\, 0 <\gamma <1 \,$, i.e., regions (I) and (II)-1 in Figure~\ref{fig-1}(b) (both resulting $H\in (1/2,1)$). 
 The probability measure $\,\mu^{Y}\,$ induced by $\,Y\,$  in \eqref{eq: Y} is absolutely continuous with respect to the Wiener measure $\,\mu^{ \widetilde{B}}\,$ over $\,[0, T ] \,$ with the Radon--Nikodym density 
\begin{equation} \label{eq: RND}
\frac{\,{\mathrm d} \mu^{Y}\,}{\,{\mathrm d} \mu^{ \widetilde{B} }\,} (Y) \, =\, \exp \bigg( - \int^{T}_{0} \varphi_{t} (Y) {\mathrm d} Y (t) - \frac{\,1\,}{\,2\,} \int^{T}_{0} [\varphi_{t}(Y)]^{2} {\mathrm d} t \bigg) \, . 
\end{equation}
By the Girsanov theorem 
\begin{align} \label{eq: Wbar}
\overline{W} (t)   &\, :=\, Y(t) + \int^{t}_{0} \varphi_{s}(Y) {\mathrm d} s  \non\\
& \, =\,  Y({t}) + \int^{t}_{0} \int^{s}_{0} L(r, s) {\mathrm d} Y(r) {\mathrm d} s \,, \quad 0 \le t \le T \, 
\end{align}
is a Brownian motion with respect to its own filtration. Moreover, $\, Y (t) \,$ can be written as 
\begin{equation} \label{eq: semi mart rep Y}
Y(t) \, =\,  \overline{W}(t) - \int^{t}_{0} \int^{s}_{0} \ell (r, s ) {\mathrm d} \overline{W}(r) {\mathrm d} s \,, \quad 0 \le t \le T \, . 
\end{equation} 
Particularly, the filtration $\,\mathcal F^{Y}(\cdot) \,$generated by $\,Y\,$ and the filtration $\,\mathcal F^{ \overline{W}} (\cdot) \,$ satisfy the identities $\, \mathcal F^{ Y} ({t} ) \, =\,  \mathcal F^{ \overline{W}} ({t}) \,$ for $\,  0 \le t \le T \,$. 

Therefore, $Y(t)$ is a semimartingale for the pair $(\alpha, \gamma)$ values in this region. 
\end{proposition}


In region (I) of Figure~\ref{fig-1}(b), since the process $X$ has a finite variation, as expressed in \eqref{eqn-X-regionI}, then the mixed process $Y$ is written as
 \begin{equation} \label{eqn-wt-lambda}
 \begin{split}
 Y(t) \,=\,\widetilde{B}(t) + X(t) \, =& \, \widetilde{B}(t) +   c \int^{t}_{0} \Big( \int^{r}_{-\infty} \Psi_{r}(s) {\mathrm d} B(s) \Big)  {\mathrm d} r\,, \\ = &\, \widetilde{B}(t) + \int^{t}_{0} \widetilde{\lambda}(r) {\mathrm d} r \,, \\
 \text{ with } \quad  \widetilde\lambda(t) \,:= & \, c \int^{t}_{-\infty} \Psi_{t}(s) {\mathrm d} B(s)\,, \quad t \ge 0 \, , 
\end{split} 
\end{equation}
where $ \Psi_{t}(s)$ given in \eqref{eq: Psi_{t}(s)}.
Thus $Y$ is, in fact, a Brownian motion with a random drift of finite variation, particularly, it is a semimartingale. 
By Theorem 2 and Lemma 4 of \cite{MR281279} 
we obtain the  Radon-Nikodym density $
\frac{\,{\mathrm d} \mu^{Y}\,}{\,{\mathrm d} \mu^{ \widetilde{B} }\,} (Y)$ in \eqref{eq: RND}, which is stated in  the following proposition.

\begin{proposition}
\label{prop-mixture-2}
Suppose that $1/2 < \alpha <1/2+ \gamma /2 $ and $\, 0 <\gamma <1 \,$, i.e., region (I) in Figure~\ref{fig-1}(b) (resulting in $H\in (1/2,1)$). 
The Radon--Nikodym density \eqref{eq: RND} over the time interval $\,[0, T]\,$ in Proposition \ref{prop-mixture} is given by 
\begin{equation*}
\frac{\,{\mathrm d} \mu^{Y}\,}{\,{\mathrm d} \mu^{ \widetilde{B} }\,} (Y) \, =\, \exp \bigg( \int_0^T  \mathbb E [ \widetilde\lambda(s) \vert \mathcal F^{Y}(s) ] {\mathrm d} Y(s) - \frac{1}{2} \int_0^T \big(\mathbb E [ \widetilde\lambda(s) \vert \mathcal F^{Y}(s) ] \big)^2 {\mathrm d} s \bigg) \,. 
\end{equation*}
Here,  $\,\varphi_{t}(Y) \,$ in \eqref{eq: phiY} is identified as $\, \varphi_{t}(Y) \, \equiv \, - \mathbb E [ \widetilde\lambda(t) \vert \mathcal F^{Y}(t) ] \,$ for $\, t \ge 0 \,$, and 
\[
\overline{W} (\cdot) \, =\, Y(\cdot) - \int^{\cdot}_{0}  \mathbb E [ \widetilde\lambda(s) \vert \mathcal F^{Y}(s) ]  {\mathrm d} s \, =\,  \widetilde{B}(\cdot) + \int^{\cdot}_{0} \big( \widetilde{\lambda}(s) - \mathbb E [ \widetilde\lambda(s) \vert \mathcal F^{Y}(s) ]  \big) {\mathrm d} s 
\]
is a Brownian motion with respect to its own filtration. 
\end{proposition}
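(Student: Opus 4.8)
The plan is to bypass the Wiener--Hopf route of Proposition~\ref{prop-mixture} and instead recognize $Y$ in region~(I) as a Brownian motion corrupted by an \emph{independent, absolutely continuous} random drift, for which the causal likelihood-ratio formula of \cite{MR281279} applies directly. First I would invoke Proposition~\ref{prop-sm-X}(i): in region~(I) the GFBM $X$ is of finite variation with $\D X(t)/\D t=c\int_{-\infty}^{t}\Psi_t(s)\,\D B(s)=\widetilde\lambda(t)$, where $\Psi_t$ is given by \eqref{eq: Psi_{t}(s)}, so that \eqref{eqn-wt-lambda} exhibits $Y=\widetilde B+\int_0^{\cdot}\widetilde\lambda(r)\,\D r$. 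Since $\widetilde B$ is independent of $B$, and hence of the drift $\widetilde\lambda$, this is exactly the ``signal plus independent white noise'' observation model underlying \cite{MR281279}.

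The single analytic hypothesis required by that theory is $\int_0^T\E[\widetilde\lambda(s)^2]\,\D s<\infty$, which I would verify by the scaling substitution $s=tu$ used in \eqref{eqn-Psi-intp-1}. Applied to $\E[\widetilde\lambda(t)^2]=c^2\int_{-\infty}^{t}\Psi_t(s)^2\,\D s$, it produces the two finite Beta integrals $\text{Beta}(1-\gamma,2\alpha-1)$ and $\text{Beta}(1-\gamma,1+\gamma-2\alpha)$ (finite exactly when $1/2<\alpha<(1+\gamma)/2$, i.e.\ in region~(I)), multiplied by $t^{2H-2}$; the power $2H-2$ also follows from the self-similarity relation $\widetilde\lambda(\kappa t)\deq\kappa^{H-1}\widetilde\lambda(t)$. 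Hence $\int_0^T\E[\widetilde\lambda(s)^2]\,\D s<\infty$ precisely because $2H-2>-1$, that is $H>1/2$, which holds throughout region~(I); this is the sharp reason the present explicit approach succeeds here and degenerates at the boundary $H=1/2$.

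With this hypothesis in hand I would apply Theorem~2 and Lemma~4 of \cite{MR281279}. These yield $\mu^{Y}\ll\mu^{\widetilde B}$ with the Radon--Nikodym density written through the causal estimate $\E[\widetilde\lambda(s)\mid\mathcal F^{Y}(s)]$, and they identify the innovations process $\overline W=Y-\int_0^{\cdot}\E[\widetilde\lambda(s)\mid\mathcal F^{Y}(s)]\,\D s$ as an $\mathcal F^{Y}$-Brownian motion; substituting $Y=\widetilde B+\int_0^{\cdot}\widetilde\lambda\,\D r$ recovers the stated second expression $\overline W=\widetilde B+\int_0^{\cdot}(\widetilde\lambda(s)-\E[\widetilde\lambda(s)\mid\mathcal F^{Y}(s)])\,\D s$. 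Since region~(I) is contained in the region $\gamma/2<\alpha<1/2+\gamma/2$ of Proposition~\ref{prop-mixture} (because $\gamma<1$ forces $\gamma/2<1/2<\alpha$), the density is \emph{also} known in the form \eqref{eq: RND} with $\varphi_t(Y)$ defined by \eqref{eq: phiY}. Comparing the two exponential expressions and using the $\mu^{\widetilde B}$-a.s.\ uniqueness of the density --- equivalently, the uniqueness of the drift in the $\mathcal F^{Y}$-Girsanov decomposition of $Y$ --- forces $\varphi_t(Y)\equiv-\E[\widetilde\lambda(t)\mid\mathcal F^{Y}(t)]$, which is the remaining assertion.

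The step I expect to be the main obstacle is the careful application of \cite{MR281279} to a drift that is genuinely anticipating relative to the observation: $\widetilde\lambda(t)$ is built from the two-sided integral $\int_{-\infty}^{t}$ and is adapted to $\mathcal F^{B}$ but not to $\mathcal F^{Y}$, and its variance blows up like $t^{2H-2}$ as $t\downarrow0$. One must check that this integrable singularity does not disturb the conditional-expectation/innovations machinery --- it does not, since $\widetilde\lambda\in L^2([0,T]\times\Omega)$ by the previous step --- and that the independence of $\widetilde B$ and $\widetilde\lambda$ places the model squarely within the hypotheses of \cite{MR281279}. By contrast, the final identification $\varphi_t(Y)=-\E[\widetilde\lambda(t)\mid\mathcal F^{Y}(t)]$ is a soft uniqueness argument rather than a computation.
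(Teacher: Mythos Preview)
Your proposal is correct and follows essentially the same route as the paper: the paper simply observes \eqref{eqn-wt-lambda}, notes that $Y$ is a Brownian motion plus an independent finite-variation drift, and cites Theorem~2 and Lemma~4 of \cite{MR281279} without further argument. Your write-up supplies the details the paper omits --- the explicit check that $\int_0^T\E[\widetilde\lambda(s)^2]\,\D s<\infty$ via the Beta integrals of \eqref{eqn-Psi-intp-1} and the integrability of $t^{2H-2}$, and the uniqueness-of-semimartingale-decomposition argument identifying $\varphi_t(Y)$ with $-\E[\widetilde\lambda(t)\mid\mathcal F^{Y}(t)]$ --- but the skeleton is identical.
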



%
%
%

\begin{remark} \label{rem-mixture}
{\rm
We conjecture that the mixture process $Y$ is not a semimartingale with respect to its own filtration in the parameter region {
$\, \alpha \in ( \gamma/2-1/2, \gamma/2] \setminus\{0\}\, $ and $\gamma \in (0,1)$ (region (II)-2 including the boundary line segment $\,\alpha \, =\,  \gamma / 2 \, , \, \gamma \in (0, 1) \,$ in Figure~\ref{fig-1}(b)) and excluding the line segment $\alpha=0$ and $\gamma \in (0,1)$.}
For standard FBM $B^H$, in Cai et al. \cite{cai2016mixed}, representations of the FBM with the Riemann-Liouville fractional integrals and derivatives are used to prove the innovation representations in Theorem 2.4 for $H<1/2$, and equivalence of the measures for $\widetilde{B}+ B^H$ and $B^H$ for $H<1/4$. However, for the GFBM $X$, it still remains open to establish the Riemann-Liouville fractional integrals and derivatives. Therefore, we leave it as future work to prove the non-semimartingale property of the mixture process $Y$ for the parameter pair $(\alpha, \gamma)$ in region (II)-2 of Figure~\ref{fig-1}(b) {excluding  the line segment $\alpha=0$ and $\gamma \in (0,1)$ }. }
\end{remark}

\begin{remark}
{\rm
In this remark, we comment on the generalized Riemann--Liouville (R-L) FBM introduced in Remark 5.1 in \cite{pang-taqqu} and its mixtures. 
It is defined by
\begin{equation} \label{eqn-X-RL}
X(t) = c \int_0^t (t-u)^\alpha u^{-\gamma/2} B({\rm d}u)\, , \quad t \ge 0\, ,
 \end{equation}
 where $B({\rm d}u)$ is a Gaussian random measure on $\RR$ with the Lebesgue control measure ${\rm d}u$ and 
 $c \in \RR$, $\gamma \in [0,1)$ and $\alpha \in (\gamma/2-1/2, \gamma/2+1/2)$. 
 It is a continuous self-similar Gaussian process with Hurst parameter $H=\alpha-\gamma/2+1/2 \in (0,1)$. When $\gamma=0$, it reduces to the standard R-L FBM
 $$
 B^H(t) = c \int_0^t (t-u)^\alpha B({\rm d} u), \quad t\ge 0\, . 
 $$
 {And when $\alpha=0$ and $\gamma\in (0,1)$, it has the expression in \eqref{eqn-X-alpha0}.}
 
 It is clear that the  semimartingale properties in Proposition \ref{prop-sm-X} hold for the process $X$ in \eqref{eqn-X-RL}. 
 In particular,  by letting the natural kernel $K_t(s) := (t-s)^\alpha s^{-\gamma/2}$, we have the spectral representation 
 $
 X(t) \deq \int_0^t K_t(s) N({\rm d}s)\, ,
 $
 for a Gaussian measure $N(\cdot)$. 
 Define 
 $
 \Psi_t(s) = C_t^{-1} \alpha (t-s)^{\alpha-1} s^{-\gamma/2}\, , 
 $
 for a time-dependent normalization constant $C_t$. As shown in the proof of Lemma 3.1, the function $\Psi_1(\cdot)$ is square integrable with respect to the Lebesgue measure if and only if $1/2< \alpha< 1/2 + \gamma/2$ and $\gamma \in (0,1)$, and thus by  Basse's characterization of the spectral representation of Gaussian semimartingales (\cite[Theorem 4.6]{basse2009spectral}), we can conclude the the semimartingale property in part (ii) of Proposition \ref{prop-sm-X}. The non-semimartingale property in part (i) of Proposition \ref{prop-sm-X} also follows from a similar argument as in the proof of the proposition.

 
Similarly, for the mixed  process $Y= \widetilde{B}+X$ with $X$ in \eqref{eqn-X-RL} and an independent BM $\widetilde{B}$,  the properties in Propositions \ref{prop-mixture} and  \ref{prop-mixture-2}  hold. Here one can let
$L(s,t) \in L^2([0,T]^2)$ be the unique solution to the Wiener--Hopf integral equation \eqref{eqn-WH} with
$
K(u,v) =  c^2\int^{u}_{0} (v-\theta)^{\alpha -1} (u-\theta)^{\alpha-1} \theta^{-\gamma} {\mathrm d} \theta\,,
$
  $\ell(s,t)\in L^2([0,T]^2)$ be the unique solution to the Volterra equation \eqref{eqn-WH-V}, and $\widetilde\lambda(t) =  \int^{t}_{0} \Psi_{t}(s) {\mathrm d} B(s)$ in \eqref{eqn-wt-lambda}.

 } 

\end{remark}

\bigskip

\section{Asset pricing  with GFBM and its mixture} \label{sec-finance-price}

\subsection{On the long-range dependence of GFBM} \label{sec-LRD} 
The GFBM $\, X\,$ in \eqref{def-X} is a self-similar process but it does not have the stationary increments, while the FBM $\,B^{H}\,$ is a self-similar process and it has the stationary increments. In order to compare the covariance decays between the FBM and the GFBM, we  introduce the concept of long range dependence (LRD) of self-similar processes below. For it, recall from \cite{MR1781002} \cite{MR770625} that for a self-similar process $\, \xi (t) \,$, $\, t \ge 0 \,$ with the Hurst index $\, H  \ge 0 \,$ and $\, \xi(0) \, =\,  0 \,$, its Lamperti transform $\, \{ \eta (t) , t \in \mathbb R\}\,$ is defined by 
\begin{equation}
\, \eta(t) \, :=\, e^{-H t} \xi (e^{t}) \,, \quad \, t \in \mathbb R \,. 
\end{equation}
The Lamperti transform $\, \eta \,$ of the self-similar Gaussian process with the Hurst index $\,H\,$ is strictly stationary with the covariance function 
\begin{equation}\label{eqn-Ceta}
\Cov(\eta(t+s), \eta(s)) \,=\, \mathbb E [ \eta(t+s) \eta (s) ] \, =: \, C_{\eta}(t) 
\end{equation} 
for $\, s, t \in \mathbb R\,$. Note that $\, \mathbb E [ \xi(t) \xi(s) ] \, =\,  (ts)^{H} C_{\eta}(\log (t/s)) \,$, $\, s, t \ge 0 \,$. 

\begin{definition}[Long range dependence of self-similar processes]\label{def:LRDSS} We say a self-similar process $\,\{ \xi (t) \,$, $\, t \ge 0\} \,$ with the Hurst index $\, H \,$ is long-range dependent, if its Lamperti transform $\, \eta \,$ is long-range dependent in the sense that 
\begin{equation} \label{eq: LRDSS}
\lim_{t\to \infty} \frac{\,1\,}{\,t\,} \log \lvert C_{\eta}(t) \rvert + H  > 0 \,. 
\end{equation}
\end{definition}

This means that the covariance function $\,C_{\eta}(t)\,$ decays slower than $\, e^{ - H t}\,$, as $\, t \,$ goes to infinity. Recall that the Lamperti transform $\, \eta (\cdot) \,$of a standard BM $\, \xi(\cdot) := B(\cdot) \,$ with $\,H = 1/2\,$ is the Ornstein-Uhlenbeck process $\, \eta(t) \, =\, e^{-t/2} B(e^{t/2}) \,$, $\, t \in \mathbb R\,$ with the covariance function $\, C_{\eta}(t) \, =\, e^{-t/2}\,$, $\, t \in \mathbb R\,$. Thus, $\, (1/t) \log ( C_{\eta}(t)) + H \, =\,  0  \,$, $\, t \in \mathbb R\,$, and hence, the standard BM  is not long-range dependent in the sense of Definition \ref{def:LRDSS}. Moreover, the Lamperti transform $\, \eta (t) \, :=\, e^{-Ht} B^{H}(e^{t}) \,$, $\, t \in \mathbb R\,$ of the FBM $\, B^{H}(\cdot)\,$ is strictly stationary and it has the covariance function 
\begin{equation} \label{eq: CovLampertiFBM}
C_{\eta}(t) \, =\,  \cosh (Ht) - 2^{2H-1} ( \sinh ( t/2))^{2H} \,, \quad t \in \mathbb R \, . 
\end{equation}
In this case the limit \eqref{eq: LRDSS} is given by 
\begin{equation} \label{eq: CovLampertiFBM2}
\lim_{t\to \infty} \frac{\,1\,}{\,t\,}  \log (C_{\eta}(t)) + H  \, =\,  2H - 1
\end{equation}
and hence, the FBM is long-range dependent in the sense of Definition \ref{def:LRDSS} if and only if $\, H > 1/2 \,$. This range $\, H  > 1/2\,$ corresponds to the LRD of the stationary increment of the FBM in the usual sense. The proof of \eqref{eq: CovLampertiFBM2} follows from \eqref{eq: 71} in Appendix. The following result (Proposition \ref{prop:LRD}) generalizes this observation. Its proof is given in Section  \ref{app:LRD} in Appendix.

\begin{proposition}[Long-range dependence] \label{prop:LRD} The GFBM $\, X \,$ in \eqref{def-X} is long-range dependent in the sense of Definition \ref{def:LRDSS}, if and only if $\, \alpha > 0 \,$. Particularly, when $\, \gamma \, =\,  0\,$, the FBM $\, B^{H}\,$ is long-range dependent if $\, H = \alpha + 1/2 > 1/2\,$. 
\end{proposition}

\begin{remark}
Definition \ref{def:LRDSS} describes the LRD of the stationary generator $\, \eta (\cdot) \,$ of the self-similar processes $\, \xi (\cdot)\,$. Self-similar processes are not necessarily stationary and the self-similarity does not necessarily imply the LRD of the stationary increments (cf. \cite{cont2005long} \cite{rogers1997arbitrage}). It follows from Proposition \ref{prop:LRD} that if $\, 0 < \gamma < 1 \,$ and $\, 0 < \alpha < \gamma / 2\,$, the GFBM $\, X\,$ in \eqref{def-X} is self-similar process with the Hurst index $\, H \in (0, 1/2)\,$ and is long-range dependent in the sense of Definition \ref{def:LRDSS}. This is different from the understanding of the Hurst index of the FBM (when $\, \gamma \, =\,  0\,$). 
\end{remark}

In the following sections we shall discuss the use of the GFBM with $\, H \in (1/2, 1)\,$ and its mixed processes for modeling the financial assets with long range dependence, by taking advantage of the semimartingale properties. 

\medskip

\subsection{Shot noise process with non-stationary noise and integrated shot noise}  \label{sec-SNP-limit}

In modeling of financial markets, Brownian motion and compound Poisson processes or more generally, L\'evy processes are widely utilized to capture effects of various noises. 
Recently, Pang and Taqqu \cite{pang-taqqu} studied a non-stationary, power-law shot noise process $\mathcal Z^*= \{\mathcal Z^*(y): y \in \RR\}$ on the whole real line defined by
\begin{equation} \label{eq: Zast}
\mathcal Z^*(y) := \sum_{j=-\infty}^\infty g^*(y-\tau_j) R_j\, , \quad y \in \RR\, , 
\end{equation}
where $\{\tau_j: j \in \ZZ\}$ is a sequence of Poisson arrival times of shots with rate $\lambda$ on the whole real line, and each $R_j$ is the noise associated with shot $j$ at time $\tau_j$ for $\, j \in \mathbb Z\, $. It is a generalization of the compound Poisson process on the positive half line. The variables $\{R_j, j \in \ZZ\}$ are conditionally independent, given $\{\tau_j\}$, and the marginal distribution of $R_j$ depends on the shot arrival time $\tau_j$, that is, $ \mathbb P(R_j \le r \, |\,  \tau_j=u) =:  F_u(r)$, $\,r \, \in\,  \mathbb R\,$, $\, u \, \in \,  \mathbb R\,$ for every $\,j \in \mathbb Z\,$. Assume that 
\begin{itemize} \item[(i)] (the power-law property) the function $\,g^{\ast} \,$ satisfies  $g^*(y) =  y^{-(1-\alpha)} L^*(y)$ for $y\ge 0$ and $g^*(y)=0$ for $y <0$ and $\alpha \in (0, 1/2)$, where $L^*$ is a positive slowly varying function at $+\infty$, and 

\item[(ii)] (the moment conditions) the common conditional distribution $\,F_{t} \,$ of the noises $R_\cdot$, given $\,\tau_{\cdot} = t\,$, satisfies the zero mean $K_1(t) := \int_\RR r {\rm d} F_t(r) =0$ for every $\, u \in \mathbb R\,$ and finite variance $K_2(t) = \int_\RR r^2 {\rm d} F_t(r) = t^{-\gamma} \tilde{L}_+(t)$ for $t>0$ and $K_2(t) = \lvert t \rvert ^{-\gamma} \tilde{L}_{-}(t)$ for $t<0$, where $\gamma \in (0,1)$, and $\tilde{L}_{\pm}$ are some positive slowly varying functions. 
\end{itemize}

The shot noise process $\, \mathcal  Z^{\ast}\,$ in \eqref{eq: Zast} is a generalization of the compound Poisson process, because if $\, g^{\ast}(y) \, :=\, {\bf 1}_{\{y > 0 \}} \,$, $\, y \in \mathbb R\,$, then $\, \mathcal  Z^{\ast} \,$ is a compound process. 
The integrated shot noise process $\mathcal Z = \{\mathcal Z(t): t \in \RR_+\}$ is defined by
\begin{equation} \label{eq: Z}
\mathcal Z(t) := \int^{t}_{0} \mathcal Z^{\ast}(y) {\mathrm d} y \, =\,  \sum_{j=-\infty}^\infty (g(t-\tau_j) - g(-\tau_j)) R_j\, , \quad t \ge 0\, ,
\end{equation}
where the shot shape function $\,g(\cdot) \,$ is differentiable with its derivative $\,g^{\ast} \,$, i.e.,  $g(t) := \int_0^t g^*(y){\rm d}y$ for $\,t \ge 0 \,$. 

Pang and Taqqu \cite{pang-taqqu} have shown that the scaled process $\widehat{\mathcal Z}^\varepsilon(t) := \varepsilon ^{H} \mathcal Z(t/\varepsilon) $ converge weakly to the GFBM $X$, as $\,\varepsilon \to 0 \,$.

\medskip 

\subsection{Stock price models driven by the shot-noise with non-stationary noise} \label{sec-price}


 Stock pricing models with a shot-noice component have been developed to study credit and insurance risks \cite{altmann2008shot,schmidt2017shot,Stute}.
In particular, the stock price $P(t)$ is modeled as
\begin{equation} \label{eq: P1}
P(t) := P(0) \exp \left( \Big( \mu  -\frac{\sigma^2}{2} \Big) t + \sigma \widetilde{B}(t)   + \sigma\int_0^t \sum_{\tau_i \le s } \mathfrak f (s-\tau_i, R_i) {\mathrm d} s  \right)\,,\end{equation} 
for $t\ge 0$, 
where $\{(\tau_i, R_i): i \in \NN\}$ is a marked point process, independent of the Brownian motion $\widetilde{B}$,  with arrival times $\tau_i$ and marks (noises) $U_i \in \RR^d$, and the function $\mathfrak f: \RR_+\times \RR^d \to \RR$ is the deterministic shot shape function. $\, \mu \in \mathbb R \,$ and $\,\sigma > 0 \,$ are some real constants. Equivalent martingale measures for this price process are studied in \cite{schmidt2017shot,schmidt2007shot}. In \cite{schmidt2017shot}, it is also discussed when the shot-noise component is Markovian or a semimartingale.  
This is usually when the function $\,\mathfrak f\,$ takes a particular form (exponential function for the Markovian property). 
In these studies, the noises $\{R_i\}$ are assumed to be i.i.d. with finite variance. Since it is usually more difficult to work with the shot noise process directly, one may use the diffusion approximations. For example, Kl{\"u}ppelberg and K{\"u}hn \cite{kluppelberg2004fractional} showed that under regular variation conditions, a Poisson shot noise process can be approximated by an FBM (under proper scaling and validated by a functional central limit theorem), and then used the limiting FBM as a stock pricing model. 

Here, as a pre-limit, we consider the usual random walk noise and the shock noises on the stock price. We evaluate the effects of these noises, when the frequency of arrival of shot noises is very high with appropriate scaling. 
Given a scaling parameter $\, \varepsilon > 0 \,$, we model a pre-limit of price process by 
\begin{align} \label{eq: Pepsilon}
P_{\varepsilon}(t) \, :=\, &  P ( 0) \exp \left( \Big( \mu  -\frac{\sigma^2}{2} \Big) t + \sigma \varepsilon^{1/2} \sum_{j=1}^{ \lfloor t/ \varepsilon \rfloor  } \xi_{j}  + \sigma\int_0^t  \frac{\,1\,}{\,\varepsilon ^{1-H}\,}\mathcal Z^{\ast} \Big( \frac{\, u\,}{\,\varepsilon \,}\Big) {\mathrm d} u \right)  \nonumber \\
\, =\,  & P ( 0) \exp \left( \Big( \mu  -\frac{\sigma^2}{2} \Big) t +\sigma \varepsilon^{1/2} \sum_{j=1}^{ \lfloor t/ \varepsilon \rfloor  } \xi_{j}    + \sigma \varepsilon^{H}\int_0^{t/\varepsilon} \mathcal  Z^{\ast} ( {\, u\,}) {\mathrm d} u \right) \nonumber \\
\, =\, &P ( 0) \exp \left( \Big( \mu  -\frac{\sigma^2}{2} \Big) t + \sigma\varepsilon^{1/2} \sum_{j=1}^{ \lfloor t/ \varepsilon \rfloor  } \xi_{j}   + \sigma \varepsilon^{H} \mathcal Z \Big( \frac{\,t\,}{\,\varepsilon\,} \Big) \right ) \,, 
\end{align}
for $t\ge 0$, 
where $\, \{ \xi_{j}, j \in \mathbb N\}\,$ are i.i.d. random variables with zero mean and unit variance, independent of the shot noise $\,\mathcal Z^{\ast}\,$ in \eqref{eq: Zast}, and  $\, \mathcal Z \,$ is the integrated shot noise process in \eqref{eq: Z}. Here, $\, \mu \,$ and $\,\sigma  > 0 \,$  are some real constants. (One may also choose a model without the random walk component, in which case the model in \eqref{eq: price with shot noise} will have only the process $X$ instead of the mixed GFBM.)



Recall that  $\widehat{\mathcal Z}^\varepsilon := \varepsilon ^{H} \mathcal Z(\cdot/\varepsilon) \Rightarrow X$ and   the random walk term $\, \varepsilon^{1/2} \sum_{j=1}^{ \lfloor t / \varepsilon \rfloor} \xi_{j}\,$, $\, t \ge 0 \,$  converges weakly to the standard BM, independent of $\, X \,$.  
As a scaling limit of \eqref{eq: Pepsilon}, we propose a stock price model using the mixed GFBM as follows: 
\begin{equation} \label{eq: price with shot noise}
\begin{split}
P(t) \, = &\,  P(0)  \exp \left( \Big( \mu  -\frac{\sigma^2}{2} \Big) t + \sigma(\widetilde{B}(t)  + X(t))   \right) \\
\, = & P(0)  \exp \left( \Big( \mu  -\frac{\sigma^2}{2} \Big) t + \sigma Y(t)    \right) \,  
, \quad t\ge 0 \, , 
\end{split}
\end{equation}
where $\, Y (\cdot) \, =\,  \widetilde{B} + X \,$ is the mixed GFBM in \eqref{eq: Y} with $\,X\,$ in \eqref{eqn-X-RL} 
(For a recent account of weak convergence in financial models, we refer to Kreps~\cite{kreps2019black}.)

Suppose that the parameters $\,(\alpha, \gamma)\,$ are in the semimartingale region: $\gamma/2 < \alpha <1/2+ \gamma /2 $ and $\, 0 <\gamma <1 \,$ (i.e., $H\in (1/2,1)$ for the GFBM $\, X\,$), as in the assumption of  Proposition \ref{prop-mixture}. 
Within this parameter range, $\, Y \,$ is a semimartingale. 
{
When the parameters satisfy
$\gamma \in [0,1)$ and $\alpha=0$,  $Y(t)$ is a martingale as given in \eqref{eqn-Y-alpha0}. In that case, the standard asset pricing theory can be applied, so we do not consider it in this section.
}

The price dynamics is determined as the unique strong solution of the linear stochastic differential equation 
\begin{equation} \label{eq: price with shot noise2}
{\mathrm d} P(t) \, =\,  P (t) ( \mu {\mathrm d} t + \sigma {\mathrm d} Y(t) ) \,, \quad t \ge 0 \, , 
\end{equation}
driven by the semimartingale $\, Y\,$, where $\, \mu \,$ is a drift and $\, \sigma \,$ is volatility of stock price under a filtered probability space $\, (\Omega, \mathcal F, (\mathcal F_{t}) , \mathbb P )$. Thus, we take the filtration $\, \mathbb F \, :=\, (\mathcal F_{t}, t \ge 0 ) \, =\, (\mathcal F^{Y}(t), t \ge 0 ) \,$.

%
%
%

\medskip

As important applications with this stock price model \eqref{eq: price with shot noise}, we consider an investor who trades this stock with price \eqref{eq: price with shot noise} and money market account with  an instantaneous interest rate $r (> 0)$ in the following sections \ref{sec: 6.2.1}-\ref{sec: 6.2.2}. 
Propositions \ref{prop-pricing-RN}-\ref{prop: 6} below generalize the previous results in \cite{bender2011fractional},  \cite{cheridito2001mixed} and \cite{MR1912144} from the mixed FBM for the Hurst index $\, H \in (3/4, 1) \,$ to the mixed GFBM for the wider range of the Hurst index with $H \in (1/2, 1)$. 
In particular, for a stock price model with a FBM  $B^H$,  Cheridito  \cite{cheridito2003arbitrage} shows that it admits arbitrage for $H\in (0,1/2) \cup (1/2,1)$ and Chridito  \cite{cheridito2001mixed} shows that for a stock price model with a mixed FBM ($\widetilde{B}+B^H$), the arbitrage can be excluded if $H \in (3/4,1)$. 
 In \cite{MR2447408} and \cite{MR1701227}, no arbitrage condition for smooth strategies is considered under the non-semimartingale stock models (e.g., mixed FBM for $\,H \in (1/2,3/4)\,$) via the path-wise stochastic calculus. Our results show that for a semimartingale stock price model with a mixed GFBM, {(excluding the special case in \eqref{eqn-Y-alpha0})}, 
 all arbitrage strategies can be excluded for a wider range of $H \in (1/2,1)$. That is a significant consequence of the parameter $\gamma$.

\medskip

\subsection{Option pricing with the mixed GFBM}  \label{sec: 6.2.1}

We first recall what is known  in the case of standard FBM $B^H$, as shown in \cite{rogers1997arbitrage,cheridito2001mixed,cai2016mixed}, the mixed process $ Y^H=\widetilde{B}+ B^H$ is a semimartingale if and only if $H =1/2$ (the Brownian case) and  $H \in (3/4,1)$. Of course, with a BM, i.e., $\,H \, =\,  1/2\,$, the standard results of stock pricing and equivalence of martingale measure can be applied. On the other hand, with $H \in (3/4,1)$, one also obtain the Radon--Nikodym derivative in \eqref{eqn-dQdP} where $Y$ is replaced by $Y^H$, and the function $\, \varphi_{t}(Y)\,$ is replaced by $\varphi_{t}(Y^{H})$ in  \eqref{eqn-varphi-H}.  

Also, recall that for the GFBM $X$ with $H=1/2$, the parameters $(\alpha, \gamma)$ lies on the line segment $\alpha=\gamma/2$ in Figure \ref{fig-1}. In this parameter set, the GFBM is a semimartingale, only when $\gamma=0$,  which becomes the special case of Black-Scholes pricing model.
{On the other hand, in the line segment $\gamma \in (0,1)$ and $\alpha=0$, the GFBM is also a semimartingale as given in \eqref{eqn-X-alpha0}, so that Black-Scholes pricing model can also applied with some modification  although it is a time-changed Brownian motion with Hurst parameter $H\in (0,1/2)$.}
Otherwise, there does not exist an equivalent martingale measure. 
Moreover, although GBM $X$ is a semimartingale in region (I), it is of finite variation, so it cannot be used in the arbitrage-free asset pricing framework. Therefore we use consider asset pricing using the mixed GFBM $Y$ in the parameter range $\gamma/2 < \alpha <1/2+ \gamma /2 $ and $\, 0 <\gamma <1 \,$.

In the following proposition, we give the expressions of the  equivalent martingale measures for the discounted stock price process modeled with the mixed process $Y$.  

\begin{proposition}\label{prop-pricing-RN}
 Assume $\gamma/2 < \alpha <1/2+ \gamma /2 $ and $\, 0 <\gamma <1 \,$ (regions (I) and (II)-1 in Figure~\ref{fig-1}(b)). Under the stock price process dynamics \eqref{eq: price with shot noise2}, the discounted stock price process $\, e^{-r t} P(t) \,$, $\, 0 \le t \le T\,$ is a martingale under the new measure $\, \mathbb Q\,$ defined by 
\begin{equation} \label{eqn-dQdP}
\frac{\,{\mathrm d} \mathbb Q\,}{\, {\mathrm d} \mathbb P \,} \bigg \vert_{\mathcal F_{T}}\, :=\,  \exp \bigg( - \int^{T}_{0} (\theta  - \varphi_{t}(Y) ) {\mathrm d} Y(t) - \frac{\,1\,}{\,2\,} \int^{T}_{0} ( \theta^{2} - \lvert \varphi_{t}(Y) \rvert^{2}) {\mathrm d} t \bigg) \, , 
\end{equation}
where $\, \theta \, :=\,  (\mu - r ) / \sigma \,$ is the market price of risk and $\, \varphi_{\cdot}(Y)\,$ is defined in \eqref{eq: phiY}. Particularly, if $1/2 < \alpha <1/2+ \gamma /2 $ and $\, 0 <\gamma <1 \,$, i.e., region (I) in Figure~\ref{fig-1}(b), then $\, \varphi_{\cdot}(Y) \, =\,  - \mathbb E [ \widetilde{\lambda} (\cdot) \, \vert \, \mathcal F^{Y}(\cdot) ] \, =\,  - \mathbb E [ \widetilde{\lambda} (\cdot) \, \vert \, \mathcal F_{\cdot} ]  \,$, 
where 
$ \widetilde\lambda(t)= \int^{t}_{0} \Psi_{t}(s) {\mathrm d} B(s)$ for $t\ge 0$. 

\end{proposition}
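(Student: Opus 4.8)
The plan is to reduce the statement to a classical Girsanov change of measure, using the innovation (canonical) decomposition of $Y$ already obtained in Proposition \ref{prop-mixture}. First I would record that, under $\mathbb{P}$, the process $\overline{W}$ in \eqref{eq: Wbar} is a Brownian motion for the filtration $\mathcal{F}^Y(\cdot) = \mathcal{F}_\cdot$, so that $Y$ has the decomposition $\mathrm{d}Y(t) = \mathrm{d}\overline{W}(t) - \varphi_t(Y)\,\mathrm{d}t$. Discounting the linear SDE \eqref{eq: price with shot noise2} gives $\mathrm{d}(e^{-rt}P(t)) = e^{-rt}P(t)\big[(\mu - r)\,\mathrm{d}t + \sigma\,\mathrm{d}Y(t)\big]$, and substituting the decomposition yields
\[
\mathrm{d}(e^{-rt}P(t)) = e^{-rt}P(t)\big[(\mu - r - \sigma\varphi_t(Y))\,\mathrm{d}t + \sigma\,\mathrm{d}\overline{W}(t)\big].
\]
Thus the only obstruction to the martingale property of the discounted price is the finite-variation drift $\mu - r - \sigma\varphi_t(Y)$.

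Next I would remove this drift by a Girsanov transformation of the $\mathbb{P}$-Brownian motion $\overline{W}$. Setting $\theta = (\mu - r)/\sigma$ and $\psi_t := \theta - \varphi_t(Y)$, I would let $\mathrm{d}\mathbb{Q}/\mathrm{d}\mathbb{P}|_{\mathcal{F}_T}$ be the stochastic exponential of $-\int_0^T \psi_t\,\mathrm{d}\overline{W}(t)$. Under $\mathbb{Q}$ the process $\overline{W}(t) + \int_0^t\psi_s\,\mathrm{d}s$ is a Brownian motion, and by construction of $\psi$ the drift above cancels, so $e^{-rt}P(t)$ is a $\mathbb{Q}$-martingale. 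To match the stated density \eqref{eqn-dQdP} there is a purely algebraic conversion: substituting $\mathrm{d}\overline{W}(t) = \mathrm{d}Y(t) + \varphi_t(Y)\,\mathrm{d}t$ into the exponent $-\int_0^T\psi_t\,\mathrm{d}\overline{W}(t) - \frac12\int_0^T\psi_t^2\,\mathrm{d}t$ and expanding $(\theta - \varphi_t(Y))^2$, the cross terms in $\theta\varphi_t(Y)$ cancel and one is left with exactly $-\int_0^T(\theta - \varphi_t(Y))\,\mathrm{d}Y(t) - \frac12\int_0^T(\theta^2 - |\varphi_t(Y)|^2)\,\mathrm{d}t$.

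The main obstacle is to show that this candidate density is a genuine probability density, i.e. that the exponential is a true $\mathbb{P}$-martingale rather than merely a local martingale; Novikov's criterion applied directly to the Gaussian integrand $\psi_t$ would require an exponential-moment bound on $\int_0^T \psi_t^2\,\mathrm{d}t$ that is not automatic for large $T$. I would bypass this by factoring the change of measure into two legitimate steps. Let $\mathbb{P}_0$ be the measure under which $Y$ itself is a Brownian motion, obtained from $\mathbb{P}$ by the density $\exp(\int_0^T\varphi_t(Y)\,\mathrm{d}\overline{W}(t) - \frac12\int_0^T\varphi_t(Y)^2\,\mathrm{d}t)$; this is a true martingale precisely because Proposition \ref{prop-mixture} (via Shepp's equivalence \cite{shepp1966radon}) already guarantees $\mu^Y \sim \mu^{\widetilde{B}}$ with a valid Radon--Nikodym density, and the above density equals $(\mathrm{d}\mu^Y/\mathrm{d}\mu^{\widetilde{B}})^{-1}$ evaluated along $Y$. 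Passing from $\mathbb{P}_0$ to $\mathbb{Q}$ is then the elementary Girsanov shift of the $\mathbb{P}_0$-Brownian motion $Y$ by the constant $\theta$, whose exponential is trivially a true martingale since $\theta$ is constant. A short computation using $Y(T) = \overline{W}(T) - \int_0^T\varphi_t(Y)\,\mathrm{d}t$ confirms that the product $(\mathrm{d}\mathbb{Q}/\mathrm{d}\mathbb{P}_0)(\mathrm{d}\mathbb{P}_0/\mathrm{d}\mathbb{P})$ equals the density in \eqref{eqn-dQdP}, so $\mathbb{Q}$ is a bona fide equivalent measure under which the discounted price is a martingale.

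Finally, for the last assertion I would restrict to region (I), where Proposition \ref{prop-mixture-2} identifies $\varphi_t(Y) = -\mathbb{E}[\widetilde{\lambda}(t)\,|\,\mathcal{F}^Y(t)]$ with $\widetilde{\lambda}(t) = \int_0^t \Psi_t(s)\,\mathrm{d}B(s)$; since $\mathcal{F}_t = \mathcal{F}^Y(t)$ this immediately gives the stated conditional-expectation form of the market price of risk.
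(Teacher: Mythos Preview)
Your approach is essentially the same as the paper's: both start from the innovation decomposition $\mathrm{d}Y=\mathrm{d}\overline{W}-\varphi_t(Y)\,\mathrm{d}t$ of Proposition~\ref{prop-mixture}, write the discounted price as a stochastic exponential with drift $\theta-\varphi_t(Y)$, and apply Girsanov so that $Y(t)+\theta t$ becomes a Brownian motion under $\mathbb{Q}$, with the region-(I) identification coming from Proposition~\ref{prop-mixture-2}. Your two-step factorization $\mathbb{P}\to\mathbb{P}_0\to\mathbb{Q}$ (first undoing Shepp's density so that $Y$ is Brownian, then shifting by the constant $\theta$) makes explicit why the candidate density is a true martingale, a point the paper leaves implicit in its appeal to Girsanov.
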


\begin{proof}
It follows from Proposition \ref{prop-mixture} and the Girsanov theorem that the process $\, \overline{W} (\cdot) \, =\,  Y(\cdot) + \int^{\cdot}_{0} \varphi_{s}(Y) {\mathrm d} s \,$ in \eqref{eq: Wbar} 
is a Brownian motion for $\,0 \le t \le T\,$ under $\, \mathbb P\,$. 

By the simple application of the product rule to \eqref{eq: price with shot noise2}, we have the discounted stock price process 
\begin{equation*}
\begin{split}
e^{-r t} P (t) \, &=\,  P(0) + \int^{t}_{0} \sigma e^{-r s} P(s) {\mathrm d} \Big( Y(s) + \frac{\,\mu - r \,}{\,\sigma\,} s \Big) \\
& \, =\, P(0) + \int^{t}_{0} \sigma e^{-r s} P(s) {\mathrm d} \Big( \overline{W}(s) - \int^{s}_{0} \varphi_{u}(Y) {\mathrm d}u  + \theta s \Big)   \,,
\end{split}
\end{equation*}
for $t\ge 0$, 
with $\, \theta \, :=\, (\mu - r)  \, / \, \sigma \,$. By another application of the Girsanov theorem, $\, Y(t) + \theta t \, 
=\,  \overline{W}(t) + \int^{t}_{0} (\theta - \varphi_{s}(Y)){\mathrm d} s 
\,$, $\, 0 \le t \le T \,$ is a Brownian motion 
under the new measure $\, \mathbb Q \,$ defined by \eqref{eqn-dQdP}, namely, 
\begin{equation}
\begin{split}
\frac{\,{\mathrm d} \mathbb Q\,}{\,{\mathrm d} \mathbb P \,} \Big \vert_{\mathcal F_{T}} \, :=\, & \exp \Big( - \int^{T}_{0} ( \theta - \varphi_{t}(Y) ) {\mathrm d} \overline{W}(u) - \frac{\,1\,}{\,2\,} \int^{T}_{0} ( \theta - \varphi_{u}(Y))^{2} {\mathrm d} u \Big)\\
\, =\, & \exp \Big( - \int^{T}_{0} (\theta  - \varphi_{t}(Y) ) {\mathrm d} Y(t) - \frac{\,1\,}{\,2\,} \int^{T}_{0} ( \theta^{2} - \lvert \varphi_{t}(Y) \rvert^{2}) {\mathrm d} t \Big) \, .
\end{split}
\end{equation}
In particular, the discounted price process $\, e^{-r t } P(t) \,$, $\, 0 \le t \le T\,$ is a martingale under $\,\mathbb Q\,$.   

If $1/2 < \alpha <1/2+ \gamma /2 $ and $\, 0 <\gamma <1 \,$, i.e., region (I) in Figure~\ref{fig-1}(b), then $\, \varphi_{\cdot}(Y) \, =\,  - \mathbb E [ \widetilde{\lambda} (\cdot) \, \vert \, \mathcal F^{Y}(\cdot) ] \, =\,  - \mathbb E [ \widetilde{\lambda} (\cdot) \, \vert \, \mathcal F_{\cdot} ]  \,$, because  $\, \mathbb F^{Y} \equiv \mathbb F^{P} \, =\, \mathbb F\,$. 
\end{proof}

Consequently, the time-$\,t\,$	 price of European option on this stock with payoff function $\, \mathfrak g \,$ and with maturity $\, T\,$ is given by 
\begin{equation*}
\mathbb E ^{\mathbb Q} [ e^{-r (T-t)}\,  \mathfrak g(P(T)) \, \vert \, \mathcal F_{t}] \, =\,  \mathbb E ^{\mathbb P } \bigg[ e^{-r (T-t)} \, \mathfrak g(P(T))  \cdot \frac{\, {\mathrm d} \mathbb Q\,}{\, {\mathrm d} \mathbb P \,} \Big \vert _{\mathcal F_{T}} \, \, \bigg \vert \mathcal F_{t}  \bigg] \, , 
\end{equation*}
where the conditional expectations $\, \mathbb E^{\mathbb Q}\,$ and $\, \mathbb E^{\mathbb P}\,$ are calculated under $\, \mathbb Q\,$ and $\, \mathbb P\,$, respectively, given the filtration $\, \{\mathcal F_{t}\}_{t\ge 0}\,$. 
Under the semimartingale stock price model \eqref{eq: price with shot noise2} with the mixed GFBM $\,Y\,$ in the regions (I) and (II)-1 of the parameter sets $\{ \, \gamma / 2 < \alpha < 1/2 + \gamma /2  \,, \, 0 < \gamma < 1 \,\}$, 
the pricing and hedging problems of various options (such as American and Asian options) are solved under the measure $\mathbb Q$ in the same way as in the standard Black-Scholes model. Thus, the Black-Scholes pricing formula is still valid under the long range dependence property of the driving mixed GFBM $Y$. 
{This semimartingale stock model generalizes the financial applications \cite{bender2011fractional} \cite{cheridito2001mixed} \cite{MR1912144} with the Hurst index $\,H \in (3/4, 1) \,$ to the case of $\,H \in (1/2, 1)\,$. }

\smallskip

Following \cite{cheridito2001mixed}, we next consider a slightly modified version $\,Y_{\epsilon}\,$ of the mixed FBM $Y$ and replace $\,Y\,$ by $\,Y_{\epsilon}\,$ in the stock price model \eqref{eq: price with shot noise2}: for a small constant $\,\epsilon >0\,$,  
\begin{equation} \label{eq: Yepsilon}
Y_{\epsilon}(t) :=\epsilon \widetilde{B}(t) + X(t) \,, \quad t \ge 0 \,.
\end{equation} 
One can check that the semimartingale property of $Y_\epsilon$ holds the same as $Y$. 
Indeed, since $\, Y_{\epsilon}(\cdot) :=\epsilon \widetilde{B}(\cdot) + X(\cdot) = \epsilon ( \widetilde{B}(\cdot) + (1/\epsilon) X(\cdot)) \, =\, \epsilon ( \widetilde{B} (\cdot) + X_{\epsilon}(\cdot)) \,$ with $\, (1/\epsilon)X(\cdot) =: X_{\epsilon}(\cdot)\,$, arguing that $\,Y_{\epsilon}(\cdot) \,$ is a semimartingale is very similar to the argument in Section \ref{sec-mix}. We can replace $\,X(\cdot) \,$ by $\,X_{\epsilon}(\cdot) \,$, $\,\Psi (u,v) \,$ in \eqref{eqn-Psi} by $\,\Psi_{\varepsilon} (u,v) := \epsilon^{-2} \Psi (u,v)\,$, $\,K(u,v) \,$ in \eqref{eq: lm4.1.1} by $\,\epsilon^{-2} K(u,v)\,$, $\,L(s, t) \,$ in \eqref{eqn-WH} by the solution $\, L_{\epsilon}(s,t)\,$ to the Wiener-Hopf integral equation 
\[
L_{\epsilon}(s,t) + \int^{t}_{0} L_{\epsilon}(r,t) K_{\epsilon} (r,s) {\mathrm d} r = - K_{\epsilon}(s,t) \, , \quad 0 \le s \le t \le T \, , 
\]
and $\, \ell (s,t) \,$ by the solution $\, \ell_{\epsilon} (s, t) \,$ to the Volterra equation 
\[
\ell_{\epsilon}(s, t)+ \int^{t}_{s} \ell_{\epsilon}(r,t) L_{\epsilon}(s,r) {\mathrm d} r = L_{\epsilon}(s, t) \, , \quad 0 \le s \le  T \, . 
\]
Then the rest of the arguments for $\,Y_{\epsilon}\,$ in \eqref{eq: Yepsilon} will follow in the same way as for $Y$. Thus, the pricing and hedging problems are solved in a similar way under the semimartingale stock model \eqref{eq: price with shot noise2} modified by replacing $\,Y\,$ by $\,Y_{\epsilon}\,$. 

We make the following observations on the process $\, Y_\epsilon\, $ in \eqref{eq: Yepsilon}. As $\epsilon\downarrow 0$, for the parameters $(\alpha,\gamma)$ in region (I) in Fig. \ref{fig-1}, the limit process $X=\lim_{\epsilon\downarrow0}Y_\epsilon$ remains to be a semimartingale, while in region (II)-1,  the limit process is no longer a semimartingale.   The limit in both regions allows an arbitrage (see section \ref{sec-arbitrage}
 below). 
 However, in both regions, the Hurst parameter $H$ for $X$ takes value in $(1/2,1)$. 
This is in contrast with the standard FBM mixture as studied in Cheridito \cite{cheridito2001mixed} where if $X=B^H$ for $H \in (3/4,1)$, then as $\epsilon\downarrow 0$, the limit remains to be a non-semimartingale.

\medskip

\subsection{Portfolio optimization with the mixed GFBM under logarithmic utility} \label{sec: 6.2.2}

Continuing from the previous section, we discuss the portfolio optimization under the stock price model \eqref{eq: price with shot noise2} with the mixed GFBM $\,Y\,$ in the regions (I) and (II)-1 of the parameter sets $\{ \, \gamma / 2 < \alpha < 1/2 + \gamma /2  \,, \, 0 < \gamma < 1 \,\}$ (see Fig. \ref{fig-1}(b)) with the logarithmic utility function $\,U(x) \, =\,  \log (x) \,$, $\, x > 0 \,$. For simplicity, we fix the instantaneous interest rate $\, r\,$ of the risk free asset and we set $\, Y_{0}(t) \, :=\, Y(t) \, /\,  \sqrt{2} \, =\,  (\widetilde{B}(t) + X(t))/\sqrt{2}  \,$, $\, \sigma_{0} \, :=\, \sqrt{2} \sigma \,$, so that the variance of $\, Y_{0}(\cdot) \,$ and the variance of the log stock price process $\, \log P(\cdot) \,$ are standardized \[
\, \text{Var} (Y_{0}(t)) \, =\,  \frac{1}{2}(t + t^{2H}) \, , \quad \, \text{Var} ( \log P(t)) = \sigma^{2}_{0} (t +t^{2H}) \,, \quad \, t \ge 0 \,. 
\]
Note that $\, \text{Var} ( X(t)) \, =\, t^{2H}\, $ and $\, \text{Var} ( \widetilde{B} (t)) \, =\,  t \,$, $\, t \ge 0 \,$. 

The investor invests the proportion $\,u^{0}_{t}\,$ into the risk free asset and the proportion $\,u^{1}_{t}\,$ into the stock with price $\, P(t)\,$ at time $\,t\,$. Here, we assume that $\,u^{1}_{\cdot}\,$ is an $\,\mathbb F^{Y}\,$-adapted, square integrable process with $\,u^{0}_{\cdot} + u^{1}_{\cdot} \, =\, 1 \,$. Then with the initial portfolio value $\,v_{0}\,$, the portfolio value process $\,V(t)\,$, $\, t \ge 0 \,$ satisfies the dynamic 
\begin{equation} \label{eq: portfolio value proc}
{\mathrm d} V(t) \, =\,  V(t) (u^{0}_{t} r + u^{1}_{t} \mu ) {\mathrm d} t + u^{1}_{t} V(t) \sigma_{0}  {\mathrm d} Y_{0}(t) \,, \quad  t \ge 0 \, .  
\end{equation}
The objective of the investor with the log utility function $\,U(\cdot) \,$ is to maximize the expected utility 
\begin{equation} \label{eq: log utility}
\mathbb E [ U(V(T)) ] \, =\,  \mathbb E [ \log (V(T) ) ] \, 
\end{equation}
at the end time $\, T\,$ of the investment horizon.  

\begin{proposition} \label{prop: 6} The solution to this maximization problem of the expected log utility \eqref{eq: log utility} of the portfolio value \eqref{eq: portfolio value proc} is given by the constant portfolio 
\begin{equation} \label{eq: const portfolio}
u^{0\ast}_{t} \, :=\, 1 - u^{1\ast}_{t} \, , \quad u^{1 \ast}_{t} \, :=\, \frac{\,\mu - r\,}{\,\sigma^{2}_{0}\,} \,, \quad t \ge 0 
\end{equation}
with the resulting optimal portfolio value process 
\begin{equation} \label{eq: opt port proc}
V^{\ast}(t) \, =\, v_{0} \exp \Big( \Big( r + \frac{\,\theta^{2}_{0}\,}{\,2\,}\Big)t + \theta_{0} Y_{0}(t) \Big) \,, \quad t \ge 0 \, 
\end{equation}
in the regions (I) and (II)-1 of the parameter sets $\{ \, \gamma / 2 < \alpha < 1/2 + \gamma /2  \,, \, 0 < \gamma < 1 \,\}$ and the line segment $\alpha = 0$, $\gamma \in [0, 1)$ (see Fig. \ref{fig-1}). Here, we use $\, Y_{0}(\cdot) \, =\,  Y(\cdot)/\sqrt{2}\,$, $\, \sigma_{0} \, =\,  \sqrt{2} \sigma \,$ and $\, \theta_{0} := (\mu - r) \, / \, \sigma_{0}\,$. 
\end{proposition}

\begin{proof} 
The proof follows from the semimartingale decomposition \eqref{eq: semi mart rep Y} of $\,Y(\cdot)\,$ in Proposition \ref{prop-mixture}. With $\,Y_{0} (\cdot)\,$ and $\, \sigma_{0} \,$, the portfolio value is given by 
\[
V(t) \, =\,  v_{0} \exp \Big(\int^{t}_{0} \Big[u_{s}^{0} r + u^{1}_{0} \mu - \frac{\,\sigma_{0}^{2}\,}{\,2\,}(u_{s}^{1})^{2} \Big] {\mathrm d} s + \int^{t}_{0} u_{s}^{1} \sigma_{0} {\mathrm d} Y_{0} (s) \Big) \, ,  
\] 
and hence, maximizing the expected log utility $\, \mathbb E [ \log (V(T)) ] \,$ is equivalent to maximizing the expectation 
\begin{equation} \label{eq: max ex log utility}
\begin{split}
& \mathbb E \Big[ 
\int^{T}_{0}\Big[u_{s}^{0} r + u^{1}_{0} \mu - \frac{\,\sigma_{0}^{2}\,}{\,2\,}(u_{s}^{1})^{2} \Big] {\mathrm d} s + \int^{T}_{0} u_{s}^{1} \sigma_{0} {\mathrm d} Y_{0} (s)
\Big]
\\
\, & =\,  \mathbb E \Big[ 
\int^{T}_{0}\Big[(u_{s}^{0} r + u^{1}_{0} \mu - \frac{\,\sigma_{0}^{2}\,}{\,2\,}(u_{s}^{1})^{2} \Big] {\mathrm d} s \Big] \, . 
\end{split}
\end{equation}
Here, when $\{ \, \gamma / 2 < \alpha < 1/2 + \gamma /2  \,, \, 0 < \gamma < 1 \,\}$, we have used the semimartingale decomposition \eqref{eq: semi mart rep Y} of $\, Y \,$, that is, 
\[
\int^{T}_{0} u_{s}^{1} \sigma_{0} {\mathrm d} Y_{0} (s) \, =\, \int^{T}_{0} u_{s}^{1} \sigma {\mathrm d} \Big ( \overline{W}(s) - \int^{s}_{0} \int^{\theta}_{0} L(r, \theta) {\mathrm d} \overline{W}(r) {\mathrm d} \theta \Big) \, , 
\]
where the square integrable function $\, L (r, \theta) \,$ is the solution to the Wiener-Hopf integral equation \eqref{eqn-WH} and $\, \overline{W}(\cdot)\,$ is another standard Brownian motion, and thus, the expectation $\, \mathbb E [ \int^{T}_{0} u_{s}^{1} \sigma_{0} {\mathrm d} Y_{0}(s) ] \, =\,  0 \,$. When $\, \alpha = 0 \,$, $\, \gamma \in [0, 1)\,$, $Y_{0} $ is a martingale, and hence, we have the expectation $\, \mathbb E [ \int^{T}_{0} u_{s}^{1} \sigma_{0} {\mathrm d} Y_{0}(s) ] \, =\,  0 \,$. 

The maximization of \eqref{eq: max ex log utility} with the constraint $\, u^{0}_{\cdot} + u^{1}_{\cdot} \, =\,  1\,$ is straightforward. The solution is given by \eqref{eq: const portfolio} and the resulting optimal portfolio value is given by \eqref{eq: opt port proc}. 
\end{proof}

\begin{remark} This resulting optimal constant portfolio \eqref{eq: const portfolio} from Proposition \ref{prop: 6} indicates the stability of the portfolio optimization under the price process model \eqref{eq: price with shot noise2} with the mixed fractional Brownian motion and log utility function for different parameter values in the regions (I) and (II)-1 (Fig. \ref{fig-1}(b)) with the Hurst index $\, H \in (1/2, 1)\,$. The expectation $\,\mathbb E [ V^{\ast}(t) ] \,$ and variance $\, \text{Var} (V^{\ast}(t) ) \,$ of the optimal portfolio $\,V^{\ast}(t)\,$ in \eqref{eq: opt port proc} depend on $\,H\,$: 
for $\, 0 \le t \le T\,$, 
\[
\mathbb E [ V^{\ast}(t) ] \, =\,  v_{0} \exp \Big( rt +\theta_{0}^{2} \Big( \frac{\,3\,}{\,4\,} t + \frac{\,1\,}{\,4\,} t^{2H}\Big) \Big) \, , 
\]
\[
\text{Var} ( V^{\ast}(t) ) \, =\,  v_{0}^{2} \exp 
\Big( rt + \theta_{0}^{2} \Big( \frac{\,3\,}{\,4\,} t + \frac{\,1\,}{\,4\,} t^{2H}\Big) \Big) \Big[ \exp 
\Big( rt + \theta_{0}^{2} \Big( \frac{\,3\,}{\,4\,} t + \frac{\,1\,}{\,4\,} t^{2H}\Big) \Big) - 1 \Big] \, . 
\]
The parameters $\,(\alpha , \gamma) \,$ appear in the covariances between different times, for example, 
\begin{equation}
\text{Cov} ( \log V^{\ast}(t), \log  V^{\ast}(s) ) \, =\,  \theta^{2} _{0}\text{Cov} (Y_{0}(t), Y_{0}(s) )  \, =\,  \frac{\,\theta^{2}_{0}\,}{2} ( (s \wedge t)  + \Psi (s, t)) \,  
\end{equation}
for $\, s, t \ge 0 \,$, where the covariance function $\, \Psi \,$ in \eqref{eqn-Psi} depends on $\, (\alpha, \gamma) \,$.

It is worth mentioning that this stability holds specifically under the log utility \eqref{eq: log utility} and the portfolio value process \eqref{eq: portfolio value proc} driven by  the mixed GFBM. 
See the recent work on the portfolio optimization under stochastic volatility models of Volterra-type for the power utility function \cite{MR4535336,MR4078803} and for more general utility function \cite{MR3974280}. The optimal portfolio choice under the general utility function and fractional stochastic environment is out of the scope of the current paper. 
\end{remark}

\medskip

\subsection{Comments on Arbitrage} \label{sec-arbitrage}

In the theory of asset pricing, the ``First Fundamental Asset Pricing Theorem" requires the existence of an equivalent martingale measure for no arbitrage, and works in the framework of semimartingales for pricing models. For stock price models with FBM $B^H$, since $B^H$ is a semimartingale if and only if $H=1/2$ \cite{liptser2012theory,rogers1997arbitrage}, arbitrage strategies have been discussed in both fractional Bachelier and Black-Scholes models \cite{rogers1997arbitrage,shiryaev1998arbitrage,salopek1998tolerance,cheridito2003arbitrage}. 
In particular, Rogers \cite{rogers1997arbitrage} constructed arbitrage for the fractional Bachelier model: a market with a money account $\xi_t =1$ with zero interest rate and a risky stock (no dividends or transaction costs) with price $\tilde{P}(t) = \tilde{P}(0) +\nu t+  \sigma B^H(t)$, for $H \in (0,1/2) \cup (1/2, 1)$ for $\, t \ge 0 \,$.


{For the GFBM $X$  in \eqref{def-X}, the parameter set that guarantees no-arbitrage is $\alpha=0$ and $\gamma\in [0,1)$ (resulting in $H\in (0,1/2]$), in which case $X$ is given in \eqref{eqn-X-alpha0} (either a standard Brownian motion when $\gamma=0$ or a time-changed Brownian motion when $\gamma \in (0,1)$).  } 
Recall Remark \ref{rem-sm}, $H=1/2$ corresponds to the line $\alpha=\gamma/2$ in Figure \ref{fig-1}, but if $\gamma\neq 0$, it is a ``fake" Brownian motion and it may lead to an arbitrage, because of the non-semimartingale property. 
Although we have shown that $\,X\,$ is a semimartingale with respect to the filtration $\mathcal{F}^B(\cdot)$ for $\alpha \in (1/2, 1/2+\gamma/2)$ and $\gamma \in (0,1)$ (resulting in $H \in (1/2,1)$ in region (I)), it is a process of finite variation. As it is shown in \cite{HPS84}, in a frictionless market with continuous trading, arbitrage opportunities are present without unbounded variation of the stock price process. Thus, the differentiable sample path in Proposition \ref{prop-sm-X} leads an arbitrage opportunity in this parameter range.

On the other hand, for the mixed BM and GFBM process $Y$ in \eqref{eq: Y}, because in the parameter range  $\alpha \in (1/2, 1/2+\gamma/2)$ and $\gamma \in (0,1)$ (resulting in $H \in (1/2,1)$ for $\, X\,$ in region (I)), the process $Y$ becomes a Brownian motion with a random drift of finite variation as given in  \eqref{eqn-wt-lambda},  there exists an equivalent martingale measure as given in Proposition \ref{prop-pricing-RN} (particularly, region (I) in Figure~\ref{fig-1}(b)), and hence, it forbids arbitrage. Moreover, by Proposition \ref{prop:LRD}, the GFBM in this region has LRD in the sense of Definition \ref{def:LRDSS}, hence the mixed GFBM process $Y$ has a component $X$ with LRD. In Section 5 of Rogers \cite{rogers1997arbitrage}, he constructed a Gaussian process with the same LRD as FBM and yet exhibiting as a semimartingale by choosing a different kernel function in the stochastic integral representation of FBM, that is, $(\epsilon + (t-s)^2)^{(2H-1)/4}$ instead of $(t-s)^{H-1/2}$. 
We provide another example as given in \eqref{eqn-wt-lambda}, which exhibits both LRD and semimartingale properties.

One can possibly construct  arbitrage using the GFBM similar as in \cite{shiryaev1998arbitrage}, but that involves the well definedness of stochastic integrals with respect to the GFBM, which we investigate in a future work.

\section*{Acknowledgements}
The authors are thankful to the editors and reviewers for their careful reading, several incisive suggestions and corrections.  
Tomoyuki Ichiba was supported in part by NSF grants DMS-1615229 and DMS-2008427. 
Guodong Pang was supported in part by CMMI-1635410,  DMS-1715875,  DMS-2216765 and Army Research Office through grant W911NF-17-1-0019. 
Murad S. Taqqu was supported in part by a Simons Foundation grant 569118 at Boston University.

\bigskip

\section{Appendix} \label{sec-app}

\subsection{Proof of Lemma \ref{lm:sq-integrability} } \label{sec-app-2}

\begin{proof}[Proof of Lemma \ref{lm:sq-integrability}] 

(i) 
Suppose that $\gamma/2 < \alpha <1/2+ \gamma /2 $ and $\, 0 <\gamma <1 \,$.
Using the inequality $\, (x+y)^{2}\le 2 (x^{2} + y^{2}) \,$ for $\, x, y > 0 \,$ and the symmetry of integral region, we obtain 
\begin{equation} \label{eq: lm4.1.2}
\int^{T}_{0} \int^{T}_{0} \bigg[ \frac{\partial^{2} \Psi}{\partial u \partial v}(u,v )\bigg]^{2} {\mathrm d} u {\mathrm d} v  \le 4 c^{4} \int^{T}_{0} \bigg(  \int^{v}_{0} (f_{1}(u,v))^{2} + (f_{2}(u,v))^{2} {\mathrm d} u \bigg) {\mathrm d} v \,. 
\end{equation}
Here, the first term $f_{1}(u,v) $ is bounded by 
\[
f_{1}(u,v) \, \le \int^{u}_{0} v^{\alpha-1} (u-\theta)^{\alpha - 1} \theta^{-\gamma} {\mathrm d} \theta \, =\,  \text{Beta} ( \alpha , 1 - \gamma)  \, u^{\alpha - \gamma}v^{\alpha -1} , 
\]
for $ u < v $. Since $ 0 < \alpha < 1$ and $\,(uw)^{\alpha -1 } (vw)^{\alpha -1} \ge ( 1 + uw)^{\alpha -1 }(1 + vw)^{\alpha -1}\,$ for $ u, v>0 $, we have a bound for  the second term $f_{2}(u,v) $, 
\begin{equation*}
\begin{split}
f_{2}(u,v) \, &=\,  \int^{\infty}_{0} (uv)^{\alpha-\gamma+1} (1 + uw)^{\alpha -1} (1 + vw )^{\alpha - 1} w^{-\gamma} {\mathrm d} w 
\\
&\le (uv)^{2\alpha - \gamma} \int^{\infty}_{1} w^{2\alpha -2 - \gamma} {\mathrm d} w  \\
&\quad + (uv)^{\alpha - \gamma +1} \int^{1}_{0} (1+uw)^{\alpha -1 } (1 + vw)^{\alpha -1 } w^{-\gamma} {\mathrm d} w \\
& \le \frac{(uv)^{2H-1}}{2 - 2 H } + \frac{\,(uv)^{\alpha - \gamma +1}\,}{\,1 - \gamma \,}\,. 
\end{split}
\end{equation*} 

Substituting these upper bounds of both terms and using again the inequality $\,(x+y)^{2} \le 2 (x^{2} + y^{2})\,$, $x, y > 0 $ for the second term, we obtain the estimates 
\begin{equation*}
\begin{split}
\int^{T}_{0} \int^{v}_{0} (f_{1}(u,v))^{2} {\mathrm d} u {\mathrm d} v \le &\,  [\text{Beta} ( \alpha , 1 - \gamma) ]^{2}\int^{T}_{0} \bigg( \int^{v}_{0} u^{2\alpha - 2 \gamma}v^{2\alpha - 2} {\mathrm d} u \bigg){\mathrm d} v  
\\
\, =  & \,  \frac{\,[\text{Beta} ( \alpha , 1 - \gamma) ]^{2} \, T^{4 \alpha - 2 \gamma}\,}{\,2(2\alpha -  \gamma) (2\alpha - 2 \gamma +1)\,} < \infty \, , 
\end{split}
\end{equation*} 
and
\begin{equation*}
\begin{split}
&  \int^{T}_{0} \int^{v}_{0} (f_{2}(u,v))^{2} {\mathrm d} u {\mathrm d} v \\
\le &  \int^{T}_{0} \bigg[ \int^{v}_{0} \bigg( \frac{(uv)^{2H-1} }{2 -2H } + \frac{\,(uv)^{\alpha - \gamma +1}\,}{\,1 - \gamma \,} \bigg)^{2} {\mathrm d} u \bigg]  {\mathrm d} v
\\
 \le&\,   2\,  \int^{T}_{0}  \int^{v}_{0} \bigg( \frac{(uv)^{4 H-2} }{(2-2H)^{2} } + \frac{\,(uv)^{2\alpha - 2\gamma +2}\,}{\,(1 - \gamma )^{2} \,} \bigg) {\mathrm d} u   {\mathrm d} v 
\\
 \, =& \, \frac{\,T^{8H+1}\,}{\,(2 - 2H)^{2} (4H-1)(4H+1)\,} + \frac{\,T^{4H-2 \gamma +4}\,}{\,(1-\gamma)^{2} (4 H - 2 \gamma +3) (2 H- \gamma + 2)\,} \\
 \, <& \, \infty \, . 
\end{split}
\end{equation*} 
The right hand sides are finite when $\, 2 \alpha > \gamma  \,$ and $\, 0 < \gamma < 1\,$. 

Therefore, combining these estimates with \eqref{eq: lm4.1.2}, we conclude the second derivative $\, K(u,v) \,$ is square integrable in $\,(0, T) \times (0, T) \,$.

\noindent (ii)  Suppose that $\, -1/2+ \gamma/2< \alpha \le \gamma/2 \,$ and $\, 0 < \gamma < 1\,$. 
 Since $\, (x+y)^{2} \ge x^{2} \,$ for $\,x, y > 0 \,$, we shall show 
\[
\int^{T}_{0} \int^{T}_{0} [K(u,v)]^{2} {\mathrm d} u {\mathrm d} v \, \ge \, 2 \int^{T}_{0} \int^{v}_{0} [f_{1}(u,v)]^{2} {\mathrm d} u {\mathrm d} v \, =\, \infty \, . 
\]
To do so, by the change-of-variable and by Jensen's inequality, we observe that if $\, u \le v \,$,
\begin{equation*}
\begin{split}
f_{1}(u,v) \, =\, & u^{\alpha - \gamma }  \int^{1}_{0} (v - u w)^{\alpha -1}w^{-\gamma} ( 1- w)^{\alpha -1} {\mathrm d} w \\
 \, =\, & u^{\alpha - \gamma } \text{Beta} (\alpha , 1 - \gamma)  \int^{1}_{0} (v - u w)^{\alpha -1}\frac{ w^{-\gamma} ( 1- w)^{\alpha -1}}{ \text{Beta} (\alpha , 1 - \gamma)} {\mathrm d} w\\
 \ge \, &  \text{Beta} (\alpha , 1 - \gamma)\,  u^{\alpha - \gamma}\, \bigg ( v - u \cdot \frac{\,\alpha \,}{\,\alpha + 1 - \gamma \,} \bigg)^{\alpha - 1} \, , 
\end{split}
\end{equation*}
because $\, w \mapsto (v - u w)^{\alpha - 1}\,$, $\, 0 < w < 1\,$ is a convex function and the expectation of Beta distribution with parameters $\,(\alpha, 1 - \gamma) \,$ is $\, \alpha / ( \alpha + 1 - \gamma ) \,$. Thus, we have a lower bound for $\, \int^{v}_{0} [f_{1}(u,v)]^{2} {\mathrm d} u \,$, that is, 
\begin{equation*}
\begin{split}
& \int^{v}_{0} [f_{1}(u,v)]^{2} {\mathrm d} u \\
 \, \ge  \, & \int^{v}_{0}[ \text{Beta} (\alpha , 1 - \gamma)]^{2}\,  u^{2 (\alpha - \gamma)}\, \bigg ( v - u \cdot \frac{\,\alpha \,}{\,\alpha + 1 - \gamma \,} \bigg)^{2 (\alpha - 1)} {\mathrm d} u \\
= \, & [ \text{Beta} (\alpha , 1 - \gamma)]^{2} v^{4 \alpha - 2 \gamma -1} \int^{1}_{0} \bigg( 1 - \theta \cdot \frac{\,\alpha \,}{\,\alpha + 1 - \gamma\,} \bigg)^{2 \alpha - 2 } \theta ^{2\alpha - 2 \gamma} {\mathrm d} \theta\\
\, = \, & [ \text{Beta} (\alpha , 1 - \gamma)]^{2} \cdot \bigg( \frac{\,1 - \gamma \,}{\, \alpha  + 1 - \gamma \,}\bigg)^{2\alpha - 2} \frac{ v^{4 \alpha - 2 \gamma -1}  }{2 \alpha - 2\gamma + 1 } \, 
\end{split}
\end{equation*}
for $\, 0 < v < T\,$. However, if $\, 2 \alpha \le \gamma \,$, this lower bound is not integrable over $\, (0, T ) \,$, and hence, $\, K(\cdot, \cdot) \,$  is not square integrable over $\, (0, T) \times (0, T) \,$.
\end{proof}

\subsection{Proof of Proposition \ref{prop:LRD}}\label{app:LRD}
Recall the  covariance function $\, C_{\eta}(t)$ in \eqref{eqn-Ceta} for the Lamperti transform $\, \eta(t) \, :=\, e^{-Ht} X (e^{t})\,$ for $t\in \mathbb R$. We have 
\begin{equation} \label{eq: Cetat}
\begin{split}
C_{\eta}(t) \, =\, c^{2} e^{-Ht} \bigg[ & \int^{1}_{0} (1 - s)^{\alpha} (e^{t} - s)^{\alpha} s^{-\gamma}  {\mathrm d} s \\
& + \int^{\infty} _{0} [ (1+s)^{\alpha} - s^{\alpha}] [(e^{t} + s)^{\alpha} - s^{\alpha} ] s^{-\gamma} {\mathrm d} s	  \bigg] \,, \quad t > 0 \, . 
\end{split}
\end{equation}

To show that $\, \alpha > 0 \,$ implies the inequality \eqref{eq: LRDSS}, it suffices to show that 
\begin{equation} \label{eq: 64}
\lim_{t\to \infty} \frac{\,1\,}{\,t\,} \log \int^{1}_{0} (1 - s)^{\alpha} (e^{t} - s)^{\alpha} s^{-\gamma} {\mathrm d} s \, \ge \,  \frac{\,\alpha \,}{\,2\,} > 0 \, , 
\end{equation}
since the second integral term in \eqref{eq: Cetat} is positive. Using the inequality $\, (e^{t} - s)^{\alpha} \ge (e^{t} -1 )^{\alpha}  \ge e^{\alpha t / 2}\,$ for $\, 0 \le s \le 1 \,$, $\, \alpha > 0 \,$, $\, t > 0 \,$, we obtain 
\[
\int^{1}_{0} (1-s)^{\alpha} ( e^{t} - s)^{\alpha} s^{-\gamma} {\mathrm d} s \ge   \int^{1}_{0} ( 1-s)^{\alpha} s^{-\gamma} {\mathrm d} s \cdot e^{\alpha t/2}  \, . 
\]
Thus, taking the logarithm first, dividing by $\, t > 0 \,$, and then taking the limit of both sides as $\,t \to \infty\,$, we obtain the desired result \eqref{eq: 64}. Hence, if $\, \alpha > 0 \,$, then $\, X\,$ in \eqref{def-X} is long range dependence in the sense of Definition \ref{def:LRDSS}. 

For necessity, assume $\, -1/2 < \alpha \le 0 \,$. From the first integral term in \eqref{eq: Cetat}, we have 
\begin{equation} \label{eq:66}
\begin{split}
& \lim_{t\to \infty}  \frac{\,1\,}{\,t\,} \log \int^{1}_{0} (1 - s)^{\alpha} (e^{t} - s)^{\alpha} s^{-\gamma} {\mathrm d} s 
\\
& \le \lim_{t\to \infty} \frac{\,1\,}{\,t\,} \log \Big((e^{t} - 1)^{\alpha} \cdot \int^{1}_{0} (1 -s)^{\alpha} s^{-\gamma} {\mathrm d} s  \Big)\, =\,  \alpha \le 0 \, . 
\end{split}
\end{equation}
For the second integral term in \eqref{eq: Cetat}, we observe  that if $\, \alpha \le 0 \,$, $\, t > 0 \,$, 
\begin{equation}
\begin{split}
& \int^{1}_{0} [ (1+s)^{\alpha} - s^{\alpha}] [(e^{t} + s)  ^{\alpha} - s^{\alpha} ] s^{-\gamma} {\mathrm d} s\\
& = \int^{1}_{0} [ s^{\alpha} - (1+s)^{\alpha} ] [s^{\alpha} - (e^{t} + s)  ^{\alpha} ] s^{-\gamma} {\mathrm d} s \\
& \le \int^{1}_{0} [ s^{\alpha} - (1+s)^{\alpha} ] s^{\alpha-\gamma}   {\mathrm d} s \, \le \, \frac{\,1\,}{\,2 \alpha - \gamma + 1\,} - \frac{\,2^{\alpha}\,}{\, \alpha - \gamma + 1\,} \, < \infty \, 
\end{split}
\end{equation}
and also, defining $\, \widetilde{\alpha} \, :=\, -\alpha > 0 \,$, we obtain that 
\begin{equation} \label{eq:68}
\begin{split}
& \int^{\infty}_{1} [ (1+s)^{\alpha} - s^{\alpha}] [(e^{t} + s)  ^{\alpha} - s^{\alpha} ] s^{-\gamma} {\mathrm d} s\\
=& \int^{\infty}_{1} [ s^{\alpha} - (1+s)^{\alpha} ] [s^{\alpha} - (e^{t} + s)  ^{\alpha} ] s^{-\gamma} {\mathrm d} s \\
= & \int^{\infty}_{1} \Big( 1 - \Big( \frac{\,s\,}{\,1+s\,} \Big)^{ \widetilde{\alpha}} \Big) \Big( 1 - \Big( \frac{\,s\,}{\,e^{t}+s\,} \Big)^{ \widetilde{\alpha}} \Big) s^{2 \alpha -\gamma} {\mathrm d} s \\
\le & \int^{\infty}_{1} s^{2\alpha -\gamma} {\mathrm d} s \, =\,  \frac{\,1\,}{\,2 \alpha - \gamma + 1\,} < \infty \, .  
\end{split}	
\end{equation}
Combining \eqref{eq: Cetat} with \eqref{eq:66}--\eqref{eq:68}, we conclude that $\, \lim_{t\to \infty} (1/t)  \log C_{\eta}(t) + H \, \le 0 \, $ when $\, -1/2 < \alpha \le 0 \,$. Finally, if $\, \gamma \, =\,  0 \,$, it is the case of the FBM $\, B^{H}\,$, and the covariance function $\, C_{\eta} (\cdot) \,$ of $\, \eta(t) := e^{- Ht} B^{H}(e^{t})\,$, $\, t \in \mathbb R\,$ is given by \eqref{eq: CovLampertiFBM} (see \cite{MR1781002} for the related computations). Then by direct calculations, we obtain \eqref{eq: CovLampertiFBM2}. Indeed, the limit for the FBM in \eqref{eq: CovLampertiFBM2} is rewritten as  
\begin{equation}
\lim_{t\to \infty} \frac{1}{\, t \, } \log ( e^{Ht} + e^{-Ht} - (e^{t/2} - e^{-t/2})^{2H}) + H \, . 
\end{equation}
Applying L'H\^opital's rule to this limit, we reduce it to 
\begin{equation} \label{eq: 71}
\begin{split}
& \lim_{t\to \infty} H \cdot \frac{e^{Ht} - e^{-Ht} - (e^{t/2} - e^{-t/2})^{2H-1} (e^{t/2} + e^{-t/2})}{e^{Ht} + e^{-Ht} - (e^{t/2} - e^{-t/2})^{2H}} + H 
\\
\, & =\,  \lim_{t\to \infty} H \cdot \frac{1 - e^{-2Ht} - (1-e^{-t})^{2H-1} (1 + e^{-t})}{1 + e^{-2Ht} - (1 - e^{-t})^{2H}} + H 
\\
\, & =\,  \lim_{\varepsilon \to 0} H \cdot \frac{1 - \varepsilon^{2H} - (1-\varepsilon)^{2H-1} (1 + \varepsilon) }{1 + \varepsilon^{2H} - (1  -  \varepsilon)^{2H}} + H \, =\,  2 H - 1 \, , 
\end{split}
\end{equation}
where we changed the variable with $\, \varepsilon := e^{-t}\,$ and applied L'H\^opital's rule in the last equality.  

This concludes the proof of Proposition \ref{prop:LRD}.  \hfill$\Box$

\bigskip

\bibliographystyle{abbrv}
\bibliography{SM-GFBM-FS}

\end{document}